\def\R{\textrm{I\kern-0.21emR}}
\def\N{\textrm{I\kern-0.21emN}}
\newcommand{\C} {\mathbb{C}}
\renewcommand{\geq}{\geqslant}
\renewcommand{\leq}{\leqslant}
\newtheorem{theorem}{Theorem}
\newtheorem{proposition}{Proposition}
\newtheorem{lemma}{Lemma}
\theoremstyle{definition}\newtheorem{remark}{Remark}
\begin{document}

\title{Characterization by observability inequalities of controllability and stabilization properties}

\author{Emmanuel Tr\'elat\thanks{
Sorbonne Universit\'e, Universit\'e Paris-Diderot SPC, CNRS, Inria, Laboratoire Jacques-Louis Lions, \'equipe CAGE, F-75005 Paris (\texttt{emmanuel.trelat@sorbonne-universite.fr}).} \quad
Gengsheng Wang\thanks{
Center for Application Mathematics, Tianjin University, Tianjin, 300072, China (wanggs@yeah.net)}\quad
Yashan Xu\thanks{School of Mathematical Sciences, Fudan University, KLMNS, Shanghai, 200433,
China  (yashanxu@fudan.edu.cn)}}
\date{}
\maketitle

\date{}
\maketitle

\begin{abstract}
Given a linear control system in a Hilbert space with a bounded control operator, we establish a characterization of exponential stabilizability in terms of an observability inequality. Such dual characterizations are well known for exact (null) controllability. Our approach exploits classical Fenchel duality arguments and, in turn, leads to characterizations in terms of  observability inequalities of approximately null controllability and of $\alpha$-null controllability.
We comment on the relationships between those various concepts, at the light of the observability inequalities that characterize them.
\end{abstract}

\section{Introduction and main results}
Let $X$ and $U$ be Hilbert spaces. We consider the linear control system
\begin{equation}\label{contsyst}
\dot y(t) = Ay(t)+Bu(t) \qquad t\geq 0
\end{equation}
where $A:D(A)\rightarrow X$ is a linear operator generating a $C_0$ semigroup $(S(t))_{t\geq 0}$ on $X$ and $B\in L(U,X)$ is a linear bounded control operator.

It is well known that, for such classes of control systems, exact (null) controllability is equivalent by duality to an observability inequality. In the existing results (e.g., heat, wave, Schr\"odinger equations), such inequalities are instrumental to establish controllability properties (see the textbooks \cite{CurtainZwart, LT1, Lions_HUM, Staffans, TW, Z}). Besides, exponential stabilizability, meaning that there exists a feedback operator $K$ such that $A+BK$ generates an exponentially stable semigroup, is characterized in the existing literature in terms of infinite-horizon linear quadratic optimal control and algebraic Riccati theory (see the previous references). But, up to our knowledge, a dual characterization of exponential stabilizability in terms of an observability inequality is not known.

In this paper, we mainly prove that the control system \eqref{contsyst} is exponentially stabilizable if and only if for every $\alpha\in(0,1)$ (equivalently, there exists $\alpha\in(0,1)$) there exist $T>0$ and $C>0$ such that
\begin{equation}\label{weakobs}
\Vert S(T)^*\psi\Vert_{X}
\leq C \Vert B^*S(T-\,\cdot\,)^*\psi\Vert_{L^2(0,T;U)} + \alpha\Vert\psi\Vert_X \qquad\forall\psi\in X .
\end{equation}
Moreover, the best stabilization decay rate is the infimum of $\frac{\ln\alpha}{T}$ over all possible couples $(\alpha,T)$ such that the above observability inequality is satisfied for some $C<+\infty$ (see Theorem \ref{thm_stab} hereafter for a more complete statement). Besides, we  present some characterizations, in terms of  observability inequalities, for  approximate null controllability and $\alpha$-null controllability (which will be introduced later) and give some connections among these concepts.

Our proof follows very simple arguments essentially exploiting Fenchel duality.
We insist that, in the weak observability inequality \eqref{weakobs}, the coefficient $\alpha$ satisfies $0<\alpha<1$. The limit case $\alpha=1$ is critical.

The above-mentioned observability inequalities involve some constants that quantify those various concepts of controllability or of stabilizability and thus open new issues of investigation.
They also shed some light on implications between those concepts.

\subsection{Statement of the main results}
To state our main results in more details, we start with several reminders and notations.

Given an initial state $y_0\in X$ and a control $u\in L^2_{\mathrm{loc}}(0,+\infty;U)$, the unique solution $y(t)=y(t;y_0,u)$ ($t\geq 0$) to \eqref{contsyst}, associated with $u$ and the initial condition: $y(0)=y_0$, satisfies
$$
y(T;y_0,u) = S(T)y_0 + L_T u \qquad\forall T>0
$$
with $L_T\in L(L^2(0,T;U),X)$ defined by $L_Tu=\int_0^T S(T-t)Bu(t)\, dt$.
Note that $y\in C^0([0,+\infty),X)\cap H^1_{\mathrm{loc}}(0,+\infty;X_{-1})$, with $X_{-1}=D(A^*)'$, the dual of $D(A^*)$ with respect to the pivot space $X$ (see, e.g., \cite{EN,TW}).
We recall that the dual mapping $L_T^*\in L(X,L^2(0,T;U))$ is given by $(L_T^*\psi)(t)=B^*S(T-t)^*\psi$ for every $\psi\in X$.

Throughout the paper we identify $X$ (resp., $U$) with its dual $X'$ (resp., $U'$). We denote by $\Vert\cdot\Vert_X$ and $\langle\cdot,\cdot\rangle_X$ (resp. $\Vert\cdot\Vert_U$ and $\langle\cdot,\cdot\rangle_U$) the Hilbert norm and scalar product in $X$ (resp., in $U$).

\medskip

Hereafter, we denote $\Vert B^*S(T-\,\cdot\,)^*\psi\Vert_{L^2(0,T;U)} = \left(\int_0^T \Vert B^*S(T-t)^*\psi\Vert_{U}^2\, dt\right)^{1/2}$.
We recall that:
\begin{itemize}
\item The control system \eqref{contsyst} is exactly controllable in time $T>0$ if, for all $y_0,y_1\in X$, there exists $u\in L^2(0,T;U)$ such that $y(T;y_0,u)=y_1$.

Equivalently, $\mathrm{Ran}(L_T)=X$, or, by duality, $L_T^*$ is bounded below, which means that there exists $C_T>0$ such that
\begin{equation}\label{obs_exactcont}
\Vert\psi\Vert_{X}\leq C_T \Vert B^*S(T-\,\cdot\,)^*\psi\Vert_{L^2(0,T;U)} \qquad \forall\psi\in X
\end{equation}
(observability inequality). In terms of the (symmetric positive semidefinite) Gramian operator $G_T\in L(X)$ defined by
$$
G_T = \int_0^T S(T-t)BB^*S(T-t)^*\, dt  ,
$$
noting that $\langle G_T\psi,\psi\rangle_X=\Vert G_T^{1/2}\psi\Vert_X^2=\int_0^T \Vert B^*S(T-t)^*\psi\Vert_{U}^2\, dt$, \color{blue}\eqref{obs_exactcont}
\color{black} is also equivalent to the inequality
$$
G_T \geq \frac{1}{C_T^2}\mathrm{id}
$$
written in terms of positive selfadjoint operators.

\item The control system \eqref{contsyst} is exactly null controllable in time $T>0$ if, for every $y_0\in X$, there exists $u\in L^2(0,T;U)$ such that $y(T;y_0,u)=0$.

Equivalently, $\mathrm{Ran}(S(T))\subset\mathrm{Ran}(L_T)$, or, by duality, there exists $C_T>0$ such that
\begin{equation}\label{obs_null}
\Vert S(T)^*\psi\Vert_{X} \leq C_T \Vert B^*S(T-\,\cdot\,)^*\psi\Vert_{L^2(0,T;U)}  \qquad \forall\psi\in X
\end{equation}
(observability inequality), i.e.,
$$
G_T \geq \frac{1}{C_T^2}S(T)S(T)^*.
$$

\item The control system \eqref{contsyst} is approximately controllable in time $T>0$ if, for every $\varepsilon>0$, for all $y_0,y_1\in X$, there exists $u\in L^2(0,T;U)$ such that $\Vert y(T;y_0,u)-y_1\Vert_X\leq\varepsilon$.

Equivalently, $\mathrm{Ran}(L_T)$ is dense in $X$, or, by duality, $L_T^*$ is injective, which means that, given any $\psi\in X$, if $B^*S(T-t)^*\psi=0$ for every $t\in[0,T]$ then $\psi=0$ (unique continuation property).

\item The control system \eqref{contsyst} is approximately null controllable in time $T>0$ if, for every $\varepsilon>0$, for every $y_0\in X$, there exists $u\in L^2(0,T;U)$ such that $\Vert y(T;y_0,u)\Vert_X\leq\varepsilon$.

Equivalently, $\mathrm{Ran}(S(T))$ (or its closure) is contained in the closure of $\mathrm{Ran}(L_T)$, or, by duality, 
given any $\psi\in X$, if $B^*S(T-t)^*\psi=0$ for every $t\in[0,T]$ then $S(T)^*\psi=0$.

Approximate controllability in time $T$ is equivalent to approximate null controllability in time $T$ when $S(T)^*$ is injective, i.e., when $\mathrm{Ran}(S(T))$ is dense in $X$.
This is the case when $(S(t))_{t\geq 0}$ is either a group or an analytic $C_0$ semigroup.

\item Given some $\alpha>0$, the control system \eqref{contsyst} is $\alpha$-null controllable in time $T$ if, for every $y_0\in X$, there exists $u\in L^2(0,T;U)$ such that $\Vert y(T;y_0,u)\Vert_X\leq\alpha\Vert y_0\Vert_X$.

Note that the control system \eqref{contsyst} is approximately null controllable in time $T$ if and only if it is $\alpha$-null controllable in time $T$ for every $\alpha>0$.

\item The control system \eqref{contsyst} is exponentially stabilizable if there exists a feedback operator $K\in L(X,U)$ such that $A+BK$ generates an exponentially stable $C_0$ semigroup $(S_K(t))_{t\geq 0}$, i.e., there exists $M\geq 1$ and $\omega<0$ such that
\begin{equation}\label{inegsemigroup}
\Vert S_K(t)\Vert\leq M\mathrm{e}^{\omega t} \qquad\forall t\geq 0 .
\end{equation}
The infimum $\omega_K$ of all possible real number $\omega$ such that \eqref{inegsemigroup} is satisfied for some $M\geq 1$ is the growth bound of the semigroup $(S_K(t))_{t\geq 0}$ and is given (see \cite{EN,Pazy}) by
\begin{equation*}
\omega_K = \inf_{t>0} \frac{1}{t} \ln \Vert S_K(t)\Vert_{L(X)} = \lim_{t\rightarrow+\infty} \frac{1}{t} \ln \Vert S_K(t)\Vert_{L(X)} .
\end{equation*}
Exponential stabilizability means that there exists $K\in L(X,U)$ such that $\omega_K<0$.

The best stabilization decay rate is defined by
\begin{equation}\label{defomegastar}
\omega^* = \inf \{ \omega_K\ \mid\ K\in L(X,U) \} .
\end{equation}
When $\omega^*=-\infty$, the control system \eqref{contsyst} is said to be \emph{completely stabilizable}: this means that stabilization can be achieved at any decay rate. We also speak of \emph{rapid stabilization}.

\end{itemize}
In the items above, the equivalence between exact controllability (resp., exact null controllability) with the observability inequality \eqref{obs_exactcont} (resp., \eqref{obs_null}) is well known (see \cite{CurtainZwart, LT1, Lions_HUM, Staffans, TW, Z}).

\medskip

In this paper we establish characterizations in terms of observability inequalities, of the concepts of $\alpha$-null controllability, of approximate controllability and of exponential stabilizability. The two first ones follow from easy arguments using Fenchel duality, as in \cite{GlowinskiLionsHe,Lions_1992} and are certainly already known (at least, we were able to find the one on approximate controllability in \cite{Boyer}, in a slightly different form). The last one, characterizing exponential stabilizability, seems to be new. We will give details on all of them in particular because stressing on the differences of the corresponding observability inequalities provides a new insight on the differences between those various notions of controllability or stabilizability.

Let us now state our main results.

\medskip

As already noted, we have $\Vert G_T^{1/2}\psi\Vert_X=\Vert B^*S(T-\,\cdot\,)^*\psi\Vert_{L^2(0,T;U)}$ for every $\psi\in X$.
We define
\begin{equation}\label{def_muy0alpha}
\mu_{y_0,\alpha}^T =\inf\left\{C\geq  0 \ \mid \
\langle\psi,S(T)y_0\rangle_X-\alpha\Vert y_0\Vert_X\Vert\psi\Vert_X \leq C \Vert G_T^{1/2}\psi\Vert_X
\quad \forall\psi\in X\right\}
\end{equation}
with the convention that $\inf\emptyset=+\infty$.
Actually, when the above set is not empty, the infimum is reached (see Section \ref{sec:proofs}).
By definition, we have $\mu_{y_0,\alpha}^T\in[0,+\infty]$; $\mu_{y_0,\alpha_2}^T\leq\mu_{y_0,\alpha_1}^T$ if $\alpha_1\leq\alpha_2$; $\mu_{y_0,\alpha}^T=0$ if $\alpha\geq\Vert S(T)\Vert_{L(X)}$; $\mu_{\lambda y_0,\alpha}^T=\lambda\mu_{y_0,\alpha}^T$ for every $\lambda>0$ (and thus $\mu_{y_0,\alpha}^T=\mu_{\frac{y_0}{\Vert y_0\Vert_X},\alpha}^T\Vert y_0\Vert_X$).
This homogeneity property leads us to define
\begin{equation}\label{def_muT}
\mu_\alpha^T = \sup_{\Vert y_0\Vert_X=1} \mu_{y_0,\alpha}^T.
\end{equation}
We claim that
\begin{equation}\label{connectionconsts10}
\mu_{\alpha}^T =\inf\left\{C\geq  0 \ \mid \
\Vert S(T)^*\psi\Vert_X-\alpha\Vert\psi\Vert_X \leq C \Vert G_T^{1/2}\psi\Vert_X \quad \forall\psi\in X\right\}
\end{equation}
(see Lemma \ref{appendixprop2} in Section \ref{sec_expstab}) and we have as well $\mu_{\alpha}^T\in[0,+\infty]$, $\mu_{\alpha_2}^T\leq\mu_{\alpha_1}^T$ if $\alpha_1\leq\alpha_2$, and $\mu_{\alpha}^T=0$ if $\alpha\geq\Vert S(T)\Vert_{L(X)}$.

\paragraph{Characterization of $\alpha$-null controllability by an observability inequality.}

\begin{theorem}\label{thm_alpha_null}
Let $T>0$ and $\alpha>0$ be arbitrary. The following items are equivalent:
\begin{itemize}
\item The control system \eqref{contsyst} is $\alpha$-null controllable in time $T>0$.
\item For every $y_0\in X$, we have $\mu_{y_0,\alpha}^T<+\infty$.
\item For every $y_0\in X$, there exists $C>0$ such that
\begin{equation}\label{obs_alpha_null}
\langle\psi,S(T)y_0\rangle_{X}\leq C \Vert B^*S(T-\,\cdot\,)^*\psi\Vert_{L^2(0,T;U)} + \alpha\Vert y_0\Vert_X\Vert\psi\Vert_X  \qquad\forall\psi\in X .
\end{equation}
\end{itemize}
When one of these items is satisfied, the smallest possible constant $C$ in the observability inequality \eqref{obs_alpha_null} is $C=\mu_{y_0,\alpha}^T$.
\end{theorem}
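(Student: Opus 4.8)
I begin by disentangling which equivalences are formal and which carry the analytic content. Since $\Vert G_T^{1/2}\psi\Vert_X=\Vert B^*S(T-\,\cdot\,)^*\psi\Vert_{L^2(0,T;U)}$, the observability inequality \eqref{obs_alpha_null} is nothing but the defining inequality of \eqref{def_muy0alpha} rearranged, so admitting some finite $C$ in \eqref{obs_alpha_null} is literally the statement $\mu_{y_0,\alpha}^T<+\infty$; this gives the equivalence of the second and third items for free. The same observation identifies the smallest admissible $C$ with $\mu_{y_0,\alpha}^T$. To justify that this infimum is attained (so that "smallest constant" is meaningful), I would note that the set of admissible constants is a closed half-line: if $C_n\downarrow\mu_{y_0,\alpha}^T$ each satisfy $\langle\psi,S(T)y_0\rangle_X-\alpha\Vert y_0\Vert_X\Vert\psi\Vert_X\le C_n\Vert G_T^{1/2}\psi\Vert_X$ for all $\psi$, then passing to the limit shows $C=\mu_{y_0,\alpha}^T$ is itself admissible. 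Thus all the work reduces to the equivalence of the first item (genuine $\alpha$-null controllability) with $\mu_{y_0,\alpha}^T<+\infty$.

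The easy direction, from $\alpha$-null controllability to $\mu_{y_0,\alpha}^T<+\infty$, I would handle by a one-line computation. Given $u\in L^2(0,T;U)$ with $\Vert S(T)y_0+L_Tu\Vert_X\le\alpha\Vert y_0\Vert_X$, writing $\langle\psi,S(T)y_0\rangle_X=\langle\psi,S(T)y_0+L_Tu\rangle_X-\langle L_T^*\psi,u\rangle_{L^2(0,T;U)}$ and using Cauchy--Schwarz on both terms yields $\langle\psi,S(T)y_0\rangle_X\le\alpha\Vert y_0\Vert_X\Vert\psi\Vert_X+\Vert u\Vert_{L^2(0,T;U)}\Vert G_T^{1/2}\psi\Vert_X$ for all $\psi$. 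Hence $C=\Vert u\Vert_{L^2(0,T;U)}$ is admissible and $\mu_{y_0,\alpha}^T\le\Vert u\Vert_{L^2(0,T;U)}<+\infty$.

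The core is the converse, for which I would use a Hahn--Banach dominated-extension argument (the concrete form of the Fenchel duality alluded to in the introduction). Set $\mu:=\mu_{y_0,\alpha}^T<+\infty$, so that $\langle\psi,S(T)y_0\rangle_X\le\mu\Vert L_T^*\psi\Vert_{L^2(0,T;U)}+\alpha\Vert y_0\Vert_X\Vert\psi\Vert_X$ for every $\psi\in X$. On the product Hilbert space $H:=L^2(0,T;U)\times X$ define the sublinear functional $p(v,w):=\mu\Vert v\Vert_{L^2(0,T;U)}+\alpha\Vert y_0\Vert_X\Vert w\Vert_X$, and on the subspace $W:=\{(L_T^*\psi,\psi)\mid\psi\in X\}$ (on which $\psi$ is determined by the second coordinate, so the assignment is unambiguous) the linear functional $\ell(L_T^*\psi,\psi):=\langle\psi,S(T)y_0\rangle_X$. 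The displayed inequality says exactly $\ell\le p$ on $W$. Hahn--Banach extends $\ell$ to $\tilde\ell$ on $H$ with $\tilde\ell\le p$; Riesz represents it as $\tilde\ell(v,w)=\langle u_0,v\rangle_{L^2(0,T;U)}+\langle b_0,w\rangle_X$, and testing the bound $\tilde\ell\le p$ against $(v,0)$ and $(0,w)$ forces $\Vert u_0\Vert_{L^2(0,T;U)}\le\mu$ and $\Vert b_0\Vert_X\le\alpha\Vert y_0\Vert_X$. Evaluating the representation on $(L_T^*\psi,\psi)$ gives $\langle\psi,S(T)y_0\rangle_X=\langle L_Tu_0+b_0,\psi\rangle_X$ for all $\psi$, hence $S(T)y_0=L_Tu_0+b_0$; taking $u:=-u_0$ produces $\Vert S(T)y_0+L_Tu\Vert_X=\Vert b_0\Vert_X\le\alpha\Vert y_0\Vert_X$, which is $\alpha$-null controllability, with moreover $\Vert u\Vert_{L^2(0,T;U)}\le\mu$.

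I expect the delicate point to be precisely this last implication, because the reachable set $\mathrm{Ran}(L_T)+\alpha\Vert y_0\Vert_X\,\overline{B}_X$ need not be closed, so a naive separation/bipolar argument would only place $S(T)y_0$ in its \emph{closure} and yield an approximating sequence of controls rather than a genuine one. The virtue of the dominated-extension form is that Riesz representation returns an honest element $u_0\in L^2(0,T;U)$, giving exact membership and an actual control; as a bonus it delivers the norm bound $\Vert u\Vert_{L^2(0,T;U)}\le\mu_{y_0,\alpha}^T$, which together with the easy direction shows the minimal-norm control has norm exactly $\mu_{y_0,\alpha}^T$ and is consistent with the "smallest constant" assertion. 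I would be careful only to check that $p$ is genuinely sublinear (it is, being a sum of seminorms) and that the extension step needs only the one-sided inequality actually supplied by $\mu_{y_0,\alpha}^T<+\infty$.
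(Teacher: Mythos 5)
Your proof is correct, and it reaches the theorem by a genuinely different route than the paper. The paper never runs a Hahn--Banach extension directly: it introduces the minimal-energy value $S_{y_0,\alpha}^T=\inf\{\frac{1}{2}\Vert u\Vert_{L^2(0,T;U)}^2 \mid y(T;y_0,u)\in\alpha\Vert y_0\Vert_X\overline B_1\}$, notes (Lemma \ref{lemS}) that $\alpha$-null controllability is exactly finiteness of $S_{y_0,\alpha}^T$ for all $y_0$, applies the Fenchel--Rockafellar duality theorem to $F(u)=\frac{1}{2}\Vert u\Vert^2$ and the indicator $G$ of $-S(T)y_0+\alpha\Vert y_0\Vert_X\overline B_1$ to get $S_{y_0,\alpha}^T=-\inf_\psi J_{y_0,\alpha}^T(\psi)$ with $J_{y_0,\alpha}^T$ as in \eqref{defJ}, and then identifies $-\inf_\psi J_{y_0,\alpha}^T(\psi)=\frac{1}{2}\bigl(\mu_{y_0,\alpha}^T\bigr)^2$ by an explicit minimization in polar coordinates $\psi=r\sigma$. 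You instead prove the two implications directly and pointwise in $y_0$: the easy one by the duality pairing plus Cauchy--Schwarz (which is in substance the weak-duality half of the paper's identity), and the hard one by dominated extension of $\ell$ on the graph subspace $\{(L_T^*\psi,\psi)\}$ followed by Riesz representation, which manufactures an exact control $u=-u_0$ with $\Vert u\Vert_{L^2(0,T;U)}\leq\mu_{y_0,\alpha}^T$. Your version buys self-containedness (Hahn--Banach rather than a citation of Fenchel--Rockafellar) and, importantly, dispenses with the constraint qualification: the paper must check that $\mathrm{Ran}(L_T)$ meets the set of continuity points of $G$ before invoking duality, a hypothesis available only under the controllability assumption, whereas your extension argument needs nothing beyond $\mu_{y_0,\alpha}^T<+\infty$; your observation that $\mathrm{Ran}(L_T)+\alpha\Vert y_0\Vert_X\overline B_1$ need not be closed, so that naive separation yields only approximate controls, is exactly the right diagnosis of why the dominated-extension form is the robust one. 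What your route gives up is the variational structure the paper exploits downstream: existence and characterization of the minimizer $\bar\psi_{y_0,\alpha}$ of $J_{y_0,\alpha}^T$, the formula $\bar u_{y_0,\alpha}(t)=B^*S(T-t)^*\bar\psi_{y_0,\alpha}$, and the identity $S_{y_0,\alpha}^T=\frac{1}{2}\bigl(\mu_{y_0,\alpha}^T\bigr)^2$, which feed Remark \ref{rem_muy0T}, Lemma \ref{lem_interpret_muT} and hence the stabilization theorem (you do recover that the minimal control norm equals $\mu_{y_0,\alpha}^T$ by combining your two directions, but not the structure or uniqueness of the optimizer, though the latter follows anyway from strict convexity). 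Two small points you handled correctly: attainment of the infimum in \eqref{def_muy0alpha} by passing to the limit in the defining inequality, and well-definedness of $\ell$ via the second coordinate of the graph; note only that your Hahn--Banach step is the real-scalar version, which matches the paper's real-Hilbert framework but would need the complex variant otherwise.
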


\paragraph{Characterization of approximate null controllability by an observability inequality.}

\begin{theorem}\label{thm_approx}
Let $T>0$ be arbitrary. The following items are equivalent:
\begin{itemize}
\item The control system \eqref{contsyst} is approximately null controllable in time $T>0$.
\item For every $y_0\in X$, for every $\alpha>0$, we have $\mu_{y_0,\alpha}^T<+\infty$.
\item For every $y_0\in X$, for every $\alpha>0$, there exists $C>0$ such that
\begin{equation}\label{obs_approx}
\langle\psi,S(T)y_0\rangle_{X} \leq C \Vert B^*S(T-\,\cdot\,)^*\psi\Vert_{L^2(0,T;U)} + \alpha\Vert y_0\Vert_X\Vert\psi\Vert_X  \qquad\forall\psi\in X .
\end{equation}
\end{itemize}
When one of these items is satisfied, the smallest possible constant $C$ in the observability inequality \eqref{obs_approx} is $C=\mu_{y_0,\alpha}^T$.
\end{theorem}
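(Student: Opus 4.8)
The plan is to deduce Theorem \ref{thm_approx} directly from Theorem \ref{thm_alpha_null}, exploiting the elementary fact (already recorded in the list of reminders above) that the system \eqref{contsyst} is approximately null controllable in time $T$ if and only if it is $\alpha$-null controllable in time $T$ for every $\alpha>0$. Since each of the three items of Theorem \ref{thm_approx} carries a universal quantifier over $\alpha>0$, the whole statement should reduce to quantifying the per-$\alpha$ equivalence of Theorem \ref{thm_alpha_null} over all $\alpha>0$.

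First I would record the scaling argument behind that preliminary equivalence, to keep the proof self-contained. Given $\varepsilon>0$ and $y_0\neq 0$, choosing $\alpha=\varepsilon/\Vert y_0\Vert_X$ turns an $\alpha$-null controllability estimate $\Vert y(T;y_0,u)\Vert_X\leq\alpha\Vert y_0\Vert_X$ into the approximate estimate $\Vert y(T;y_0,u)\Vert_X\leq\varepsilon$; conversely, given $\alpha>0$ and $y_0\neq 0$, choosing $\varepsilon=\alpha\Vert y_0\Vert_X$ runs the implication the other way. The degenerate case $y_0=0$ is trivial, taking $u=0$. This yields the announced equivalence between approximate null controllability in time $T$ and $\alpha$-null controllability in time $T$ for every $\alpha>0$.

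Next, for each fixed $\alpha>0$, I would invoke Theorem \ref{thm_alpha_null}: $\alpha$-null controllability in time $T$ is equivalent to the finiteness of $\mu_{y_0,\alpha}^T$ for every $y_0\in X$, and equivalent to the existence, for every $y_0\in X$, of a constant $C>0$ for which \eqref{obs_alpha_null} holds; moreover the least such $C$ equals $\mu_{y_0,\alpha}^T$. Since the observability inequalities \eqref{obs_alpha_null} and \eqref{obs_approx} are literally identical, taking the conjunction over all $\alpha>0$ immediately produces the three equivalent items of Theorem \ref{thm_approx}, and the statement on the optimal constant transfers verbatim.

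There is no genuine obstacle here, as the theorem is essentially a corollary of Theorem \ref{thm_alpha_null}. The only points requiring a little care are to handle the state $y_0=0$ separately in the scaling step, and to introduce the two universal quantifiers (over $y_0$ and over $\alpha$) in a compatible order, so that the per-$\alpha$ equivalence of Theorem \ref{thm_alpha_null} applies uniformly across all $\alpha>0$.
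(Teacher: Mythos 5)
Your proof is correct and in substance coincides with the paper's: there, Theorems \ref{thm_alpha_null} and \ref{thm_approx} are proved simultaneously from Lemma \ref{lemS} (whose second item encodes exactly the scaling equivalence between approximate null controllability and $\alpha$-null controllability for every $\alpha>0$ that you re-derive) together with the Fenchel-duality identity $S_{y_0,\alpha}^T=\frac{1}{2}\bigl(\mu_{y_0,\alpha}^T\bigr)^2$, so the paper's deduction of Theorem \ref{thm_approx} is precisely the conjunction over $\alpha>0$ of the per-$\alpha$ statement that you perform. Presenting it as a corollary of Theorem \ref{thm_alpha_null} rather than proving both at once from the duality identity is an organizational difference only, not a different argument.
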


Equivalently, one can replace ``every $\alpha>0$" with ``every $\alpha\in(0,\Vert S(T)\Vert_{L(X)})$" in Theorem \ref{thm_approx}.

\paragraph{Characterization of exponential stabilizability by an observability inequality.}

\begin{theorem}\label{thm_stab}
The following items are equivalent:
\begin{itemize}
\item The control system \eqref{contsyst} is exponentially stabilizable.
\item For every $\alpha\in(0,1)$ (equivalently, there exists $\alpha\in(0,1)$), there exists $T>0$ such that $\mu_\alpha^T<+\infty$.
\item For every $\alpha\in (0,1)$ (equivalently, there exists $\alpha\in (0,1)$), there exist $T>0$ and $C>0$ such that, for every $y_0\in X$, there exists $u\in L^2(0,T;U)$ such that
\begin{equation}\label{alphaC}
\Vert y(T;y_0,u)\Vert_X\leq \alpha\Vert y_0\Vert_X\qquad\textrm{and}\qquad\Vert u\Vert_{L^2(0,T;U)}\leq C\Vert y_0\Vert_X.
\end{equation}
\item For every $\alpha\in(0,1)$ (equivalently, there exists $\alpha\in(0,1)$), there exist $T>0$ and $C>0$ such that
\begin{equation}\label{obs_stab}
\Vert S(T)^*\psi\Vert_{X} \leq C \Vert B^*S(T-\,\cdot\,)^*\psi\Vert_{L^2(0,T;U)} + \alpha\Vert\psi\Vert_X   \qquad\forall\psi\in X .
\end{equation}
\end{itemize}
When one of these items is satisfied, the smallest possible constant $C$ in \eqref{alphaC} and in the observability inequality \eqref{obs_stab} is $C=\mu_\alpha^T$.

Moreover, the best stabilization decay rate defined by \eqref{defomegastar} is
\begin{equation}\label{def_omegastar}
\omega^* = \inf \left\{ \frac{\ln\alpha}{T}\ \bigm|\ \alpha\in (0,1),\ T>0 \ \textrm{s.t.}\  \mu_\alpha^T<+\infty \right\}.
\end{equation}
\end{theorem}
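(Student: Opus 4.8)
The plan is to separate the statement into the ``bookkeeping'' equivalences among the last three items (which come for free from the definitions and the Fenchel duality already set up for Theorem~\ref{thm_alpha_null}) and the two substantial implications relating exponential stabilizability to the finiteness of $\mu_\alpha^T$, and finally to read off the rate formula from quantitative versions of these implications. For the bookkeeping, \eqref{obs_stab} is literally the inequality $\|S(T)^*\psi\|_X-\alpha\|\psi\|_X\le C\|G_T^{1/2}\psi\|_X$, whose smallest admissible $C$ is $\mu_\alpha^T$ by the very definition \eqref{connectionconsts10}; this identifies the last item with $\mu_\alpha^T<+\infty$ and pins the sharp constant. To tie \eqref{alphaC} to $\mu_\alpha^T$, I would run, for fixed $y_0$, the Fenchel--Rockafellar duality for $\inf\{\tfrac12\|u\|_{L^2}^2:\ \|S(T)y_0+L_Tu\|_X\le\alpha\|y_0\|_X\}$: using $\|L_T^*\psi\|_{L^2}=\|G_T^{1/2}\psi\|_X$ and the support function of a ball, the dual is $\sup_\psi\big[\langle\psi,S(T)y_0\rangle_X-\alpha\|y_0\|_X\|\psi\|_X-\tfrac12\|G_T^{1/2}\psi\|_X^2\big]$, which by positive homogeneity equals $\tfrac12(\mu_{y_0,\alpha}^T)^2$. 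Hence the minimal cost of steering $y_0$ into $\bar B(0,\alpha\|y_0\|_X)$ is exactly $\mu_{y_0,\alpha}^T$, and since $\mu_{y_0,\alpha}^T=\|y_0\|_X\,\mu_{y_0/\|y_0\|_X,\alpha}^T\le\|y_0\|_X\,\mu_\alpha^T$ with equality attained over the unit sphere, the smallest uniform constant in \eqref{alphaC} is $\mu_\alpha^T$.

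For the easy implication (exponential stabilizability $\Rightarrow$ $\mu_\alpha^T<+\infty$ for every $\alpha\in(0,1)$), I would take $K$ with $\|S_K(t)\|\le M\mathrm{e}^{\omega t}$, $\omega<0$, and use the feedback control $u(t)=KS_K(t)y_0$ on $[0,T]$. Then $y(T;y_0,u)=S_K(T)y_0$, so $\|y(T;y_0,u)\|_X\le M\mathrm{e}^{\omega T}\|y_0\|_X$ and $\|u\|_{L^2(0,T;U)}\le \|K\|\,M\,(2|\omega|)^{-1/2}\|y_0\|_X$. Choosing $T$ large enough that $M\mathrm{e}^{\omega T}\le\alpha$ gives \eqref{alphaC}, hence $\mu_\alpha^T<+\infty$, for every $\alpha\in(0,1)$; the ``for every'' and ``there exists'' versions then coincide once the converse is proved.

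The converse ($\mu_\alpha^T<+\infty$ for some $\alpha\in(0,1)$ and $T>0$ $\Rightarrow$ exponential stabilizability) is where the real work lies. Writing $\mu=\mu_\alpha^T$, I would iterate the steering estimate on the windows $[nT,(n+1)T]$: steering $y_n$ into $\bar B(0,\alpha\|y_n\|_X)$ at cost $\le\mu\|y_n\|_X$ produces states with $\|y_n\|_X\le\alpha^n\|y_0\|_X$ and a concatenated control $u$ with $\|u\|_{L^2(0,\infty;U)}^2\le\mu^2\|y_0\|_X^2\sum_n\alpha^{2n}<+\infty$, since $\alpha<1$. Local boundedness of $(S(t))$ and of $L_t$ on $[0,T]$ then yields $y(\cdot;y_0,u)\in L^2(0,\infty;X)$, so the system satisfies the finite-cost (optimizability) condition. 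The hard part is to upgrade this to a genuine bounded static feedback $K\in L(X,U)$ with negative growth bound: this is not a duality statement, and I would invoke the classical equivalence, valid for bounded $B$, between optimizability and exponential stabilizability via the minimal nonnegative solution $P$ of the algebraic Riccati equation and the feedback $K=-B^*P$ (see \cite{CurtainZwart}). I expect this to be the main obstacle of the whole theorem.

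Finally, for the rate formula, let $R$ denote the infimum in \eqref{def_omegastar}. The inequality $R\le\omega^*$ follows from a quantitative reading of the easy implication: for $K$ with $\omega_K<\omega<0$ and $\|S_K(t)\|\le M\mathrm{e}^{\omega t}$, the pair $(\alpha,T)=(M\mathrm{e}^{\omega T},T)$ is admissible for all large $T$ and satisfies $\tfrac{\ln\alpha}{T}=\tfrac{\ln M}{T}+\omega\to\omega$, so $R\le\omega_K$ and thus $R\le\omega^*$. For $\omega^*\le R$ I would rescale: given an admissible $(\alpha,T)$ and $\omega\in(\tfrac{\ln\alpha}{T},0)$, set $\beta=\mathrm{e}^{\omega T}>\alpha$ and substitute $v(t)=\mathrm{e}^{-\omega t}u(t)$; since the shifted trajectory satisfies $y_\omega(T;y_0,v)=\mathrm{e}^{-\omega T}y(T;y_0,u)$ for the generator $A-\omega I$, the steering estimate transfers to $(A-\omega I,B)$ with parameters $(\alpha/\beta,T,\mu/\beta)$ and $\alpha/\beta\in(0,1)$. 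Applying the already-proved converse to $(A-\omega I,B)$ produces $K$ making $(A-\omega I)+BK$ exponentially stable, i.e. $\omega_K<\omega$, whence $\omega^*\le\omega$ for every $\omega>\tfrac{\ln\alpha}{T}$ and therefore $\omega^*\le\tfrac{\ln\alpha}{T}$; taking the infimum gives $\omega^*\le R$, and combining the two bounds yields $\omega^*=R$.
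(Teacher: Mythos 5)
Your proposal is correct and takes essentially the same route as the paper: the Fenchel duality computation identifying the minimal steering cost with $\mu_{y_0,\alpha}^T$ and its supremum over the unit sphere (the paper's Lemmas \ref{appendixprop2} and \ref{lem_interpret_muT}), the feedback control $u=KS_K(\cdot)y_0$ on $[0,T]$ for the easy direction, and the window-iteration/concatenation argument reducing the converse to the finite-cost condition \eqref{Jineq}, settled by classical Riccati theory (the paper's Lemma \ref{lem_expstab}). The only place you go beyond the paper is the rate formula, where your rescaling via $A-\omega\,\mathrm{id}$ gives an explicit proof of $\omega^*\leq\inf\frac{\ln\alpha}{T}$ at a step the paper dismisses as ``obvious by inspecting the above argument.''
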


Comments and remarks on these results are now in order.

\begin{remark}[On approximate controllability]
By the same approach, we obtain as well the following characterization of approximate controllability by an observability inequality:

\begin{quote}
{\it
Define  $\mu_{y_0,y_1,\alpha}^T\in[0,+\infty]$ similarly as $\mu_{y_0,\alpha}^T$, replacing the term $S(T)y_0$ with $S(T)y_0-y_1$. The control system \eqref{contsyst} is approximately controllable in time $T>0$ if and only, for all $y_0,y_1\in X$ and for every $\alpha>0$, there exists $C>0$ such that
$$
\langle\psi,S(T)y_0-y_1\rangle_{X} \leq C \Vert B^*S(T-\,\cdot\,)^*\psi\Vert_{L^2(0,T;U)} + \alpha\Vert y_0\Vert_X\Vert\psi\Vert_X  \qquad\forall\psi\in X .
$$
When $\mu_{y_0,y_1,\alpha}^T<+\infty$, it is the smallest constant $C$ in the observability inequality above.
}
\end{quote}
This statement underlines in an unusual way the difference between approximate controllability and approximate null controllability (recall that both notions coincide if $S(T)^*$ is injective, or equivalently if $\mathrm{Ran}(S(T))$ is dense in $X$, as it can be obviously recovered from the observability inequalities).

Similar statements can be obtained for $\alpha$-controllability to some target point $y_1$. We do not give details.
\end{remark}

\begin{remark}
With Theorem \ref{thm_approx}, we recover \cite[Proposition 1.17]{Boyer} where \eqref{obs_approx} is established by a contradiction argument and is written in this paper in the equivalent form of a sum of squares, i.e., with the present notations,
$$
\frac{1}{2}\langle\psi,S(T)y_0\rangle_{X}^2\leq  C \int_0^T \Vert B^*S(T-\,\cdot\,)^*\psi\Vert_U^2\, dt + \alpha^2\Vert y_0\Vert_X^2\Vert\psi\Vert_X^2 \qquad\forall\psi\in X .
$$
\end{remark}

\begin{remark}
From Theorems \ref{thm_alpha_null} and \ref{thm_approx}, we see that the constant $\mu_{y_0,\alpha}^T$ quantifies the $\alpha$-null controllability and null approximate controllability properties in some sense.
Actually, as established in the proof in Section \ref{sec_fenchel}, when $\mu_{y_0,\alpha}^T<+\infty$ we have
$$
\mu_{y_0,\alpha}^T=\Vert\bar u_{y_0,\alpha}\Vert_{L^2(0,T;U)}
$$
where $\bar u_{y_0,\alpha}$ is the (unique) control of minimal $L^2$ norm steering in time $T$ the control system \eqref{contsyst} from $y_0$ to the ball $\alpha\Vert y_0\Vert_X\overline B_1$ (and even, to the boundary of this ball, i.e., $\Vert y(T;y_0,\bar u_{y_0,\alpha})\Vert_X=\alpha\Vert y_0\Vert_X$).
\end{remark}

\begin{remark}
Using \eqref{alphaC}, we see that exponential stabilizability implies $\alpha$-null controllability in some time $T>0$ (depending on $\alpha$) for every $\alpha>0$.
\end{remark}

\begin{remark}\label{rem_approx_stab}
Neither approximate controllability nor exponential stabilizability imply each other:
\begin{itemize}
\item There exist control systems that are exponentially stabilizable but that are not approximately null controllable in any time $T$.

For instance, take $B=0$ and take $A$ generating an exponentially stable semigroup, i.e., $\Vert S(t)\Vert_{L(X)}\leq Me^{-\beta t}$ for some $M\geq 1$ and $\beta<0$, then a minimal time $T_\alpha$ is at least required to realize $\alpha$-null controllability is $T_\alpha=\frac{1}{\beta}\ln\frac{M}{\alpha}$, and $T_\alpha$ cannot be bounded uniformly with respect to every $\alpha>0$ arbitrarily small.
\item There exist control systems that are approximately controllable in some time $T$ but that are not exponentially stabilizable (see \cite[Example 5.2.2 page 228]{CurtainZwart} or \cite[Example 3.16]{PZ} or \cite[Theorem 3.3, (ii), page 227]{Z}).
\end{itemize}
\end{remark}

\begin{remark}\label{rem_muinfty}
By definition $\mu_{y_0,\alpha}^T$ depends on the initial point $y_0$.
We have seen that $\mu_{y_0,\alpha}^T$ is positively homogeneous with respect to $y_0$, i.e.,
$\mu_{y_0,\alpha}^T = \mu_{\frac{y_0}{\Vert y_0\Vert_X},\alpha}^T \Vert y_0\Vert_X$, which has led us to define $\mu_\alpha^T$ by \eqref{def_muT}.

It is then natural to wonder when $\mu_\alpha^T<+\infty$, i.e., when $\mu_{y_0,\alpha}^T$ is uniformly bounded on the unit sphere of $X$ (defined by $\Vert y_0\Vert_X=1$). When it happens, the constant $C$ in the observability inequalities \eqref{obs_alpha_null} and \eqref{obs_approx} is uniform with respect to all possible $y_0\in X$ such that $\Vert y_0\Vert_X=1$, and thus, taking $y_0=\frac{S(T)^*\psi}{\Vert S(T)^*\psi\Vert_X}$, we obtain the observability inequality \eqref{obs_stab} characterizing exponential stabilizability. Therefore:
\begin{itemize}
\item {\it Given some $\alpha\in(0,1)$, we have $\mu_\alpha^T<+\infty$ for some $T>0$ if and only if $\alpha$-null controllability in some time $T>0$ and exponential stabilizability are equivalent.}

\item We have the equivalence:
{\it there exists $T>0$ such that $\mu_\alpha^T<+\infty$ for every $\alpha>0$, if and only if approximate null controllability in some time $T>0$ and exponential stabilizability are equivalent.}

\end{itemize}
Note that there exist examples of control systems that are $\alpha$-null controllable or approximately null controllable in some time $T$ but not exponentially stabilizable (see Remark \ref{rem_approx_stab}).
\end{remark}

\begin{remark}\label{rem_exactnullcont}
If the control system \eqref{contsyst} is exactly null controllable in time $T$ then $\mu_\alpha^T<+\infty$ for every $\alpha>0$, and $\mu_\alpha^T$ has a limit as $\alpha\rightarrow 0^+$, denoted by $\mu_0^T$ (these facts are easily seen by considering the optimal controls $\bar u_{y_0,\alpha}$). In particular, we have the observability inequality
$$
\langle\psi,S(T)y_0\rangle_X \leq \mu_0^T \Vert B^*S(T-\,\cdot\,)^*\psi\Vert_{L^2(0,T;U)}
$$
for every $\psi\in X$ and every $y_0\in X$ of norm $1$.
Taking $y_0=\frac{S(T)^*\psi}{\Vert S(T)^*\psi\Vert_X}$, we recover the observability inequality \eqref{obs_null} corresponding to exact null controllability, and we have $C_T = \mu_0^T$.
Actually:
\begin{quote}
{\it
The control system \eqref{contsyst} is exactly null controllable in time $T$ if and only if $\mu_0^T=\lim_{\alpha\rightarrow 0^+}\mu_\alpha^T<+\infty$.
}
\end{quote}
\end{remark}

\begin{remark}
Recall that $\mu_\alpha^T=\mu_{y_0,\alpha}^T=0$ when $\alpha\geq\Vert S(T)\Vert_{L(X)}$.
When $\alpha\rightarrow 0^+$, in general $\mu_\alpha^T$ and $\mu_{y_0,\alpha}^T$ tend to $+\infty$. For instance, if the control system \eqref{contsyst} is approximately null controllable in time $T$ but not exactly null controllable in time $T$, then $\mu_{y_0,\alpha}^T\rightarrow+\infty$ as $\alpha\rightarrow 0^+$, for some $y_0\in X$.
It is then natural to wonder when those quantities remain uniformly bounded with respect $\alpha\in(0,\Vert S(T)\Vert_{L(X)}]$. We have the following fact:
\begin{quote}
{\it
Given $y_0\in X$ and $T>0$, if $\mu_{y_0,\alpha}^T$ is uniformly bounded with respect $\alpha\in(0,\Vert S(T)\Vert_{L(X)}]$, then  approximately null controllability in time $T$ is equivalent to exact null controllability in time $T$.
}
\end{quote}
Indeed, it follows from the arguments of proof in Section \ref{sec_fenchel} that, when $\mu_{y_0,\alpha}^T$ is uniformly bounded with respect $\alpha$, for every $y_0\in X$ there exists a (unique) optimal control $\bar u_{y_0,0}$ steering the control system \eqref{contsyst} from $y_0$ to $0$ in time $T$. Hence, in turn, $\mu_{y_0,\alpha}^T$ is bounded uniformly with respect to all $y_0\in X$ such that $\Vert y_0\Vert_X=1$.
\end{remark}

\begin{remark}\label{rem_exp_mu}
If the semigroup $(S(t))_{t\geq 0}$ is exponentially stable then the observability inequality \eqref{obs_stab} is obviously satisfied with $B=0$. Indeed, the semigroup $(S(t))_{t\geq 0}$ is exponentially stable if and only if there exists $T>0$ such that $\Vert S(T)\Vert_{L(X)}<1$. This is in accordance with the fact that, in this case, no control is required to stabilize the control system.

When the semigroup is not exponentially stable, the observability inequality \eqref{obs_stab} can be seen as a weakened version of the observability inequality \eqref{obs_null} corresponding to exact null controllability, by adding the term $\alpha\Vert\psi\Vert_X$ at the right-hand side for some $\alpha\in(0,1)$: this appears as a kind of compromise between the lack of exponential stability of $(S(t))_{t\geq 0}$ and the feedback action needed to exponentially stabilize the control system \eqref{contsyst}.
\end{remark}

\begin{remark}
Inspecting the observability inequalities \eqref{obs_null} and \eqref{obs_stab}, we recover the well known fact that if the control system \eqref{contsyst} is exactly null controllable in some time $T$ then it is exponentially stabilizable (see, e.g., \cite[Theorem 3.3 page 227]{Z}).
The converse is wrong in general: as said in Remark \ref{rem_exactnullcont}, the control system \eqref{contsyst} is exactly null controllable in time $T$ if and only if $\mu_0^T=\lim_{\alpha\rightarrow 0^+}\mu_\alpha^T<+\infty$; it may happen that when the system is exponentially stabilizable, as $\alpha\rightarrow 0^+$, the infimum of times $T$ such that $\mu_\alpha^T<+\infty$ tends to $+\infty$.

See Remark \ref{rem_complete_stab} hereafter for a stronger statement and for a partial converse.
\end{remark}

\begin{remark}[Complete stabilizability]\label{rem_complete_stab}
Complete stabilizability means that, given any $\omega\in\R$, one can find a feedback $K\in L(X,U)$ such that $\omega_K<\omega$, or equivalently, that the best stabilization rate given by \eqref{def_omegastar} is $-\infty$.
According to the expression \eqref{def_omegastar}, we have complete stabilizability if either, for a given $\alpha\in(0,1)$, we have $\mu_\alpha^T<+\infty$ for every $T>0$ arbitrarily small, or, for a given $T>0$, we have $\mu_\alpha^T<+\infty$ for every $\alpha>0$ arbitrarily small (which is equivalent, by Remark \ref{rem_exactnullcont}, to exact null controllability in time $T$).
Several remarks on complete stabilizability are in order.
\begin{itemize}
\item We have the following result:
\begin{quote}
{\it
If the control system \eqref{contsyst} is exactly null controllable in some time $T$ then it is completely stabilizable.
}
\end{quote}

This fact is proved in Proposition \ref{prop_exactnull_implies_complete_stab} in Appendix \ref{app:exactnull_implies_complete_stab}. We have not been able to find it in the existing literature (actually this fact is usually established under the additional assumption that $(S(t))_{t\geq 0}$ is a group).

This result also follows by Remark \ref{rem_exactnullcont} because, since $\mu_\alpha^T$ remains uniformly bounded as $\alpha\rightarrow 0$ for some $T>0$ fixed, it follows that $\omega^*=-\infty$.

\item Let us assume that $(S(t))_{t\geq 0}$ is a group. Then there exists $M\geq 1$ and $\omega\geq 0$ such that $\Vert S(t)\Vert_{L(X)}\leq Me^{\omega\vert t\vert}$ for every $t\in\R$.
If the control system \eqref{contsyst} is exponentially stabilizable, then by Theorem \ref{thm_stab}, using that $\Vert\psi\Vert_X\leq \Vert S(-T)\Vert_{L(X)}\Vert S(T)^*\psi\Vert_X$, we have
\begin{multline*}
\forall\alpha\in(0,1)\quad \exists T>0\quad \exists C>0\quad \textrm{s.t.}\\
\Vert S(T)^*\psi\Vert_X \leq C\Vert B^*S(T-\,\cdot\,)^*\psi\Vert_{L^2(0,T;U)} + \alpha Me^{\omega\vert T\vert}\Vert S(T)^*\psi\Vert_X\qquad\forall\psi\in X
\end{multline*}
and therefore we easily recover the following result:
\begin{quote}
{\it
When $(S(t))_{t\geq 0}$ is a group, we have equivalence of: exact controllability in some time $T$; exact null controllability in some time $T$; complete stabilizability.
}
\end{quote}
This result is already known (see \cite{Slemrod} and \cite[Theorem 3.4 page 229]{Z}).

The strategy developed in \cite{Komornik} (applying also, to some extent, to unbounded admissible control operators) consists of taking $K_\lambda = -B^* C_\lambda^{-1}$ where $C_\lambda$ is defined by
$$
C_\lambda = \int_0^{T+1/2\lambda} f_\lambda(t) S(-t) B B^* S(-t)^* \, dt
$$
(variant of the Gramian operator) with $\lambda>0$ arbitrary, $f_\lambda(t) = e^{-2\lambda t}$ if $t\in[0,T]$ and $f_\lambda(t) = 2\lambda e^{-2\lambda T}(T+1/2\lambda-t)$ if $t\in[T,T+1/2\lambda]$. The function
$V(y) = \langle y, C_\lambda^{-1} y \rangle $ is a Lyapunov function (as noticed in \cite{Coron}), and the feedback $K_\lambda$ yields exponential stability with rate $-\lambda$. We also refer to \cite{CL,PWX} for developments on rapid stabilization.

\item Let us assume that $A$ is skew-adjoint, which is equivalent, by the Stone theorem, to the fact that $A$ generates a unitary group $(S(t))_{t\in\R}$ (see \cite{EN}). Then $\Vert S(T)^*\psi\Vert_X=\Vert\psi\Vert_X$ for every $\psi\in X$ and therefore the observability inequality \eqref{obs_stab} characterizing exponential stabilizability is equivalent to the observability inequality \eqref{obs_exactcont} characterizing exact controllability and can be achieved, for a given $T>0$, with arbitrarily small values of $\alpha>0$, which implies that $\omega^*=-\infty$. Therefore we recover a result of \cite{Liu}:
\begin{quote}
{\it
When $A$ is skew-adjoint, we have equivalence of: exact controllability in time $T$; exact null controllability in time $T$; exponential stabilizability; complete stabilizability.
}
\end{quote}

\end{itemize}

\end{remark}

 \begin{remark}
Assume that the control system \eqref{contsyst} is  exponentially stabilizable.
By Theorem \ref{thm_stab}, for every $\alpha\in (0,1)$, there is $T>0$ such that
$\mu_\alpha^T<+\infty$. Defining $T_\alpha=\inf\{T>0 \ \mid\ \mu_\alpha^T<+\infty\}$, by \eqref{def_omegastar} we have
$$
\omega^* = \inf \left\{ \frac{\ln\alpha}{T_\alpha}\ \bigm| \ \alpha\in(0,1) \right\} .
$$
%
%
\end{remark}

%
%

\begin{remark}
Assume that $X=X_1\oplus X_2$ with $X_1$ and $X_2$ Banach subspaces of $X$ and that, in this decomposition, the operators $A$ and $B$ are written as
$$
A=\begin{pmatrix}
A_1 & 0\\
0 & A_2
\end{pmatrix} \quad\textrm{and}\quad
B = \begin{pmatrix}
B_1\\
B_2
\end{pmatrix}
$$
with $A_i:D(A_i)\subset X_i\rightarrow X_i$ generating a $C_0$-semigroup $(S_i(t))_{t\geq 0}$ on $X_i$, for $i=1,2$. 
Writing $y(t)=y_1(t)+y_2(t)\in X_1\oplus X_2$, the control system \eqref{contsyst} is equivalent to
\begin{align}
\dot y_1(t) &= A_1 y_1(t) + B_1 u(t) \label{subsys1} \\
\dot y_2(t) &= A_2 y_2(t) + B_2 u(t) \label{subsys2}
\end{align}
We have the following well known result, reminiscent of \cite{Triggiani_JMAA1975}:
\begin{quote}
{\it
If the control subsystem \eqref{subsys1} on $X_1$ is exponentially stabilizable and if the semigroup $(S_2(t))_{t\geq 0}$ is exponentially stable then the control system \eqref{contsyst} is exponentially stabilizable.
}
\end{quote}
Indeed, let $K_1\in L(X_1,U)$ be such that $A_1+B_1K_1$ generates an exponentially stable semigroup: this means that, given an initial condition $y(0)=y_1(0)+y_2(0)\in X_1\oplus X_2$, plugging the feedback control $u=K_1y_1$ into the control subsystem \eqref{subsys1} implies that $\Vert y_1(t)\Vert_{X_1}\leq M_1e^{\omega_1 t}$ for some $M_1>0$ and some $\omega_1<0$. Now, integrating \eqref{subsys2} (with the same control), we obtain as well that $y_2(t)$ converges exponentially to $0$ as $t\rightarrow +\infty$.

Note that, when $X_1$ is of finite dimension, the above feedback control is of finite dimensional nature. This procedure has been used and generalized in various contexts (see, e.g., \cite{CoronTrelat,Russell}).

The above result is in accordance with Theorem \ref{thm_stab}. Indeed, the control subsystem \eqref{subsys1} on $X_1$ is exponentially stabilizable and the semigroup $(S_2(t))_{t\geq 0}$ is exponentially stable, if and only if for every $\alpha_1\in(0,1)$ there exist $\alpha_2\in(0,1)$, $C_1>0$ and $T_1>0$ such that
\begin{align*}
\Vert S_1(T_1)^*\psi_1\Vert_{X_1} &\leq C_1 \Vert B_1^* S_1(T_1-\,\cdot\,)^*\psi_1\Vert_{L^2(0,T_1;U)} + \alpha_1\Vert\psi_1\Vert_{X_1}  & \forall\psi_1\in X_1 \\
\Vert S_2(T_1)^*\psi_2\Vert_{X_2} &\leq \alpha_2\Vert\psi_2\Vert_{X_2} & \forall\psi_2\in X_2
\end{align*}
It is easy to recover the observability inequality \eqref{obs_stab} from these two inequalities. We do not provide any details.
\end{remark}


\subsection{An example}
Let $\Omega\subset \mathbb{R}^d$ (with $d\geq 1$) be a smooth bounded domain. Let $\omega\subset \Omega$ be an open subset satisfying the Geometric Control Condition in some time $T$ (see \cite{BLR,LRLTT}).
We consider the controlled hyperbolic-parabolic coupled system:
\begin{equation}\label{e2.1}
\begin{cases}
z_{tt}(t,x)-\triangle z(t,x)= w(t,x)+\chi_\omega(x) u(t,x) \quad&(t,x)\in\Omega\times (0,+\infty),\\
w_{t}(t,x)-\triangle w(t,x)=0\quad&(t,x)\in\Omega\times (0,+\infty),\\
z(0,x)=w(0,x)=0 & x\in\partial\Omega,\\
\end{cases}
\end{equation}
with initial conditions $z(0,\cdot)\in H^1_0(\Omega)$, $z_t(0,\cdot)\in L^2(\Omega)$, $w(0,\cdot)\in L^2(\Omega)$.
This is a control system of the form \eqref{contsyst} with $X=H_0^1(\Omega)\times L^2(\Omega)\times L^2(\Omega)$, $U=L^2(\Omega)$ and
$$
A=\begin{pmatrix}
0 &1 &0\\
\triangle&0&1\\
0&0&\triangle
\end{pmatrix}
\quad\textrm{and}\quad
B=\begin{pmatrix}
0 \\
\chi_\omega\\
0
\end{pmatrix}.
$$
We claim that:
\begin{itemize}
\item The control system \eqref{e2.1} is not exactly null controllable, in any time $T$.

Indeed, the second equation in  \eqref{e2.1} is not controlled.

\item The control system \eqref{e2.1} with the zero control $u=0$ is not asymptotically stable.

Indeed, taking $w(0,\cdot)=0$, we have $w(t,\cdot)=0$ for every $t\geq 0$ and then, since $u=0$, we have conservation of the $H^1\times L^2$ norm of $z$ thanks to the first equation in \eqref{e2.1}.

\item The control system \eqref{e2.1} is exponentially stabilizable.
\end{itemize}
The last item is proved thanks to Theorem \ref{thm_stab}. The argument goes as follows.

The adjoint system is
\begin{equation}\label{e2.2}
\begin{cases}
\phi_{tt}(t,x)-\triangle\phi(t,x) =0,\quad&(t,x)\in\Omega\times (0,T),\\
\xi_{t}(t,x)+\triangle\xi(t,x)+\phi_t(t,x)=0\quad& (t,x)\in\Omega\times (0,T),\\
\xi(0,x)=\phi(0,x)=0,& x\in\partial\Omega,\\
\end{cases}
\end{equation}
with $\phi(\cdot,T)\in L^2(\Omega)$, $\phi_t(\cdot,T)\in H^{-1}(\Omega)$, $\xi(T,\cdot)\in L^2(\Omega)$.
Since $(\omega,T)$ satisfies the Geometric Control Condition, the adjoint system is observable in time $T$ (see \cite{BLR,LRLTT}) and thus there exists $C>0$ such that
\begin{equation}\label{e2.3}
\Vert (\phi(0), \phi_t(0)\Vert_{L^2(\Omega)\times H^{-1}(\Omega)}^2 \leq C\int^T_0\int_\omega\phi(t,x)^2\, dt\, dx
\end{equation}
for any solution of \eqref{e2.2}.
Let $\lambda_1$ be the first eigenvalue of the Dirichlet Laplacian $-\triangle$. Let  $T>\sqrt{2}/2$ satisfying that
\begin{equation}\label{e2.4}
e^{-2\lambda_1T}\leq \frac{1}{8}.
\end{equation}
We infer from the second equation of \eqref{e2.2} that
$$
\|\xi(0)\|^2=\left\|e^{\triangle T }\xi(T)-\int^T_0e^{\triangle s}\phi_t(s)\, ds\right\|^2
\leq 2\left\|e^{\triangle T}\xi(T)\right\|^2+2\left\|\int^T_0e^{\triangle s}\phi_t(s)\, ds\right\|^2
$$
and then, using that $\left\|e^{\triangle t }\right\|\leq e^{-\lambda_1t}\leq 1$ for every $t\geq 0$, we obtain
\begin{multline*}
\|\xi(0)\|^2
\leq 2e^{-2\lambda_1T}\|\xi(T)\|^2+2\left\|\int^T_0\|\phi_t(s)\|\, ds\right\|^2
\leq 2e^{-2\lambda_1T}\|\xi(T)\|^2+2T\int^T_0\|\phi_t(s)\|^2\, ds\\
\leq 2e^{-2\lambda_1T}\|\xi(T)\|^2+2T\int^T_0\left\| (\phi(s), \phi_t(s))\right\|^2\, ds
\end{multline*}
and therefore, using the conservation of energy for the wave equation and \eqref{e2.4},
\begin{equation}\label{e2.5}
\|\xi(0)\|^2\leq2e^{-2\lambda_1T}\|\xi(T)\|^2+2T^2\left\| \phi(0), \phi_t(0) \right\|^2
\leq \frac{1}{4}\|\xi(T)\|^2+2T^2\left\| (\phi(0), \phi_t(0))\right\|^2.
\end{equation}
Multiplying \eqref{e2.3} by $4T^2$ and using \eqref{e2.5}, we obtain that
\begin{multline*}
\left\| (\phi(0), \phi_t(0))\right\|^2+\|\xi(0)\|^2\leq 2T^2 \left\| (\phi(0), \phi_t(0)) \right\|^2+\|\xi(0)\|^2\\
\leq\frac{1}{4}\|\xi(T)\|^2
+4T^2C\int^T_0\int_\omega\phi(t,x)^2\, dt\, dx
\end{multline*}
This leads to
$$
2\left\| (\phi(0), \phi_t(0), \xi(0))\right\|
\leq \|\xi(T)\|
+4T\sqrt{C}\|\chi_\omega\phi\|
\leq\left\| (\phi(T), \phi_t(T), \xi(T)) \right\|+4T\sqrt{C}\|\chi_\omega\phi\|.
$$
Using Theorem \ref{thm_stab}, we conclude that the control system \eqref{e2.1} is exponentially stabilizable.

\subsection{Extensions, further comments and open problems}\label{sec:furthercomments}
\paragraph{Banach spaces.}
Throughout the paper we have assumed that the state space $X$ and the control space $U$ are Hilbert spaces. All our results can be extended without difficulty to the case where $X$ and $U$ are reflexive Banach spaces. One has to be careful to replace, in all observability inequalities, the scalar product $\langle\cdot,\cdot\rangle_X$ with the duality bracket $\langle\cdot,\cdot\rangle_{X',X}$.

\paragraph{Unbounded control operators.} We have assumed  that the control operator $B$ is linear continuous with values in $X$, i.e., $B\in L(U,X)$. This assumption covers the case of internal controls, but not, for instance, of boundary controls. For the latter case, we speak of \emph{unbounded control operators}, which are operators $B$ that are not continuous from $U$ to $X$ but are continuous from $U$ to some larger space $X_{-\alpha}$ which can be defined by extrapolation (scale of Banach spaces, see \cite{EN,Staffans,TW}). Given such a control operator $B\in L(U,X_{-\alpha})$ with $\alpha>0$, the range of the operator $L_T$ may then fail to be contained in $X$. We say that $B$ is \emph{admissible} when $\mathrm{Ran}(L_T)\subset X$ for some (and thus for all) $T>0$.
Note that, assuming that $X$ and $U$ are reflexive, it is known that if $B$ is admissible then $B\in L(U,X_{-1/2})$ (see \cite{TW}).

For an admissible control operator, since $\mathrm{Ran}(L_T)\subset X$, all arguments of Section \ref{sec_fenchel} remain valid.
One should anyway avoid to use $G_T^{1/2}$ in this more general context and replace $\Vert G_T^{1/2}\psi\Vert_X$ with $\Vert B^*S(T-\,\cdot\,)^*\psi\Vert_{L^2(0,T;U)}$ everywhere throughout the proof.
Therefore:

\begin{theorem}
Theorems \ref{thm_alpha_null} and \ref{thm_approx} are true in the more general context where the control operator $B$ may be unbounded but is admissible.
\end{theorem}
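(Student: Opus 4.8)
The plan is to show that the proofs of Theorems~\ref{thm_alpha_null} and~\ref{thm_approx}, given in Section~\ref{sec_fenchel}, invoke the bounded control operator hypothesis $B\in L(U,X)$ only through the fact that $L_T$ is a bounded operator from $L^2(0,T;U)$ into $X$, and that this property is guaranteed precisely by admissibility. First I would recall that, by definition, admissibility means $\mathrm{Ran}(L_T)\subset X$ for some (hence every) $T>0$; the closed graph theorem then yields $L_T\in L(L^2(0,T;U),X)$, with adjoint $L_T^*\in L(X,L^2(0,T;U))$ still given by $(L_T^*\psi)(t)=B^*S(T-t)^*\psi$. Thus the reachability decomposition $y(T;y_0,u)=S(T)y_0+L_Tu$ and the duality pairing $\langle L_Tu,\psi\rangle_X=\langle u,L_T^*\psi\rangle_{L^2(0,T;U)}$ hold verbatim, and these are the only structural ingredients entering the Fenchel duality computation.

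Second, I would trace the argument of Section~\ref{sec_fenchel} and observe that the Fenchel duality reformulation of the minimal-norm control problem---whose value is $\mu_{y_0,\alpha}^T$---depends only on $L_T$ being bounded and linear on a Hilbert space, together with weak lower semicontinuity of the $L^2$ norm and weak compactness of bounded sets in $L^2(0,T;U)$; none of this uses boundedness of $B$ as a map into $X$. In particular, existence and uniqueness of the optimal control $\bar u_{y_0,\alpha}$, attainment of the infimum in~\eqref{def_muy0alpha}, and the identity $\mu_{y_0,\alpha}^T=\Vert\bar u_{y_0,\alpha}\Vert_{L^2(0,T;U)}$ persist unchanged. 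Hence the equivalences listed in the two theorems, and the identification of the smallest admissible constant $C$ as $\mu_{y_0,\alpha}^T$, carry over with the same arguments.

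The only genuine adjustment concerns the Gramian: in the unbounded setting the operator $G_T=\int_0^T S(T-t)BB^*S(T-t)^*\,dt$ and its square root $G_T^{1/2}$ need not be well defined in $X$, so wherever the bounded-case proof writes $\Vert G_T^{1/2}\psi\Vert_X$ I would instead write $\Vert B^*S(T-\,\cdot\,)^*\psi\Vert_{L^2(0,T;U)}=\Vert L_T^*\psi\Vert_{L^2(0,T;U)}$. Since the observability inequalities~\eqref{obs_alpha_null} and~\eqref{obs_approx} are already stated in terms of $\Vert B^*S(T-\,\cdot\,)^*\psi\Vert_{L^2(0,T;U)}$, the statements of Theorems~\ref{thm_alpha_null} and~\ref{thm_approx} are literally unchanged, and the proofs go through line by line after this substitution. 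I expect the main point requiring care to be precisely this bookkeeping: verifying that every appearance of $G_T^{1/2}$ in Section~\ref{sec_fenchel} serves only as shorthand for $\Vert L_T^*\,\cdot\,\Vert_{L^2(0,T;U)}$ and is never exploited through spectral properties of $G_T$ in $X$ that might fail for unbounded admissible $B$. Once this is confirmed, admissibility supplies everything else and no further hypothesis is needed.
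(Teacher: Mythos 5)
Your proposal is correct and follows essentially the same route as the paper, which likewise argues that admissibility gives $\mathrm{Ran}(L_T)\subset X$ so that all the Fenchel duality arguments of Section~\ref{sec_fenchel} remain valid, provided one replaces $\Vert G_T^{1/2}\psi\Vert_X$ by $\Vert B^*S(T-\,\cdot\,)^*\psi\Vert_{L^2(0,T;U)}$ throughout. You in fact supply slightly more detail than the paper (the closed graph theorem for boundedness of $L_T$ and the persistence of the adjoint identity), but the substance is identical.
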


Now, concerning the exponential stabilizability result, the critical fact is in the proof of Lemma \ref{lem_expstab} where we use Riccati theory: indeed this theory is well established in the general case only for admissible control operators and analytic semigroups (see Remark \ref{remRiccati} at the end of the proof in Section \ref{sec_expstab}). Therefore:

\begin{theorem}\label{thm_stab_extended}
Theorem \ref{thm_stab} is true in the more general context where the control operator $B$ is unbounded and admissible and  the semigroup $(S(t))_{t\geq 0}$ is analytic.
\end{theorem}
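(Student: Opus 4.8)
The plan is to revisit the proof of Theorem~\ref{thm_stab} step by step and to isolate the \emph{single} place where the boundedness of $B$ was genuinely used, namely the construction of a stabilizing feedback via Riccati theory in Lemma~\ref{lem_expstab}; every other step extends verbatim to the admissible case.

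First I would treat the Fenchel-duality part. Since $B$ is admissible, we have $\mathrm{Ran}(L_T)\subset X$ and $L_T\in L(L^2(0,T;U),X)$, with dual $L_T^*\psi=B^*S(T-\,\cdot\,)^*\psi$ lying in $L^2(0,T;U)$. The arguments of Section~\ref{sec_fenchel} only ever manipulate $L_T$, $L_T^*$ and the duality pairing, so they apply unchanged, provided one systematically writes $\Vert B^*S(T-\,\cdot\,)^*\psi\Vert_{L^2(0,T;U)}$ in place of $\Vert G_T^{1/2}\psi\Vert_X$ (the operator $G_T^{1/2}$ need not take values in $X$ when $B$ is unbounded). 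This already gives the equivalence of the second, third and fourth items of Theorem~\ref{thm_stab}, together with the identification $C=\mu_\alpha^T$ of the optimal constant, exactly as in the bounded case; it is precisely the content of Theorems~\ref{thm_alpha_null} and~\ref{thm_approx} in the admissible setting, which has already been granted.

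It then remains to link these items to exponential stabilizability itself. The direction \emph{stabilizability $\Rightarrow$ observability inequality} is easy and needs no analyticity: given a stabilizing $K\in L(X,U)$ with $\Vert S_K(t)\Vert\leq Me^{\omega t}$ and $\omega<0$, I fix $\alpha\in(0,1)$ and choose $T$ so large that $Me^{\omega T}\leq\alpha$; for $y_0\in X$ the open-loop control $u(t)=KS_K(t)y_0$ reproduces the closed-loop trajectory, so $\Vert y(T;y_0,u)\Vert_X\leq\alpha\Vert y_0\Vert_X$, and the admissibility of the feedback (which, under analyticity, supplies the $L^2$-bound $\Vert u\Vert_{L^2(0,T;U)}\leq C\Vert y_0\Vert_X$) yields \eqref{alphaC}, hence $\mu_\alpha^T<+\infty$ and \eqref{obs_stab} by the duality of the previous paragraph.

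The hard part will be the converse, \emph{observability inequality $\Rightarrow$ stabilizability}, which is where Lemma~\ref{lem_expstab} is invoked. From $\mu_\alpha^T<+\infty$ with $\alpha\in(0,1)$ one obtains uniform-cost $\alpha$-null controllability, i.e. a contraction of the state by the factor $\alpha<1$ over each window of length $T$ at cost $\leq C\Vert y_0\Vert_X$; the real work is to convert this open-loop contraction into a \emph{stationary} feedback $K\in L(X,U)$ for which $A+BK$ generates an exponentially stable semigroup. This is carried out through the infinite-horizon LQ problem and the algebraic Riccati equation. For an unbounded (but admissible) control operator, the solvability of the Riccati equation and the required closed-loop generation and admissibility properties are established in general only for analytic semigroups (the parabolic theory; see Remark~\ref{remRiccati}). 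This is the only obstruction to dropping boundedness, and it is exactly why Theorem~\ref{thm_stab_extended} is stated under the analyticity hypothesis; under that hypothesis Lemma~\ref{lem_expstab} goes through unchanged, and the decay-rate formula \eqref{def_omegastar} then follows as in the bounded case, completing the proof.
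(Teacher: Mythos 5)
Your proposal is correct and follows essentially the same route as the paper, which proves Theorem~\ref{thm_stab_extended} by noting that the Fenchel-duality arguments survive for admissible $B$ (replacing $\Vert G_T^{1/2}\psi\Vert_X$ by $\Vert B^*S(T-\,\cdot\,)^*\psi\Vert_{L^2(0,T;U)}$) and that only the final invocation of Riccati theory in Lemma~\ref{lem_expstab} forces the analyticity hypothesis (Remark~\ref{remRiccati}). The one imprecision is your claim that everything besides Riccati extends \emph{verbatim}: the intermediate-time estimate in the iteration of Lemma~\ref{lem_expstab} uses $\Vert B\Vert_{L(U,X)}$, which is unavailable for unbounded $B$, and must be replaced by the admissibility bound $\bigl\Vert L_{t-kT}u_k(kT+\,\cdot\,)\bigr\Vert_X\leq C\Vert u_k\Vert_{L^2(kT,(k+1)T;U)}$ exactly as the paper spells out --- a harmless fix already implicit in your admissibility assumption.
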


For instance, this situation covers the case of heat equations in a $C^2$ bounded open subset $\Omega\subset\R^n$, with $X=L^2(\Omega)$, with Neumann control at the boundary of $\Omega$ (for which, at best, $B\in L(U,X_{-1/4-\varepsilon})$ for every $\varepsilon>0$), but not with Dirichlet control at the boundary (for which, at best, $B\in L(U,X_{-3/4-\varepsilon})$ for every $\varepsilon>0$). We refer to \cite{LT1} for details.

Since the critical fact is only in the use of Riccati theory in Lemma \ref{lem_expstab}, there are anyway some particular situations where the semigroup is not analytic, the control operator is admissible, and however Riccati theory is established (see \cite{LT1}). But in full generality the question is open.

\paragraph{Systems with an observation}
Throughout the paper we have focused on the control system \eqref{contsyst},
without observation.
We could add to the system an observation $z(t)=Cy(t)$, where $C\in L(X,Y)$ is an observation operator, with $Y$ another Hilbert space. Corresponding notions of controllability and of stabilizability are classically defined as well.
Extending our main results to that context is an interesting issue.

\paragraph{Discretization problems.}
The observability inequality \eqref{obs_stab} may certainly be exploited to recover results on uniform semi-discretizations (or full discretizations). The problem is the following. Consider a spatial semi-discrete model of \eqref{contsyst}, written as
\begin{equation}\label{contsystN}
\dot y_N=A_Ny_N+B_Nu_N
\end{equation}
with $y_N(t)\in\R^N$ (see \cite{Boyer,LabbeTrelat,LT1,LT2} for a general framework on discretization issues). Assuming that the control system \eqref{contsyst} is exponentially stabilizable (equivalently, that the observability inequality \eqref{obs_stab} is satisfied), how to ensure that the family of control systems \eqref{contsystN} is \emph{uniformly} exponentially stabilizable? Uniform means here that, for each $N$, there exists a feedback matrix $K_N$ such that $\Vert\exp(t(A_B+B_NK_N))\Vert\leq Me^{\omega t}$ for some $M\geq 1$ and some $\omega<0$ that are \emph{uniform} with respect to $N$.

This problem has been much studied in the literature with various approaches.
In \cite{BanksIto1997, LT1, LiuZheng}, the convergence of the Riccati matrix $P_N$ (corresponding to \eqref{contsystN}) to the Riccati operator $P$ (corresponding to \eqref{contsyst}) is proved in the general parabolic case, even for unbounded control operators, that is, when $A:D(A)\rightarrow X$ generates an analytic semigroup, $B\in L(U,D(A^*)')$, and $(A,B)$ is exponentially stabilizable.
Uniform exponential stability is also proved under uniform Huang-Pr\"uss conditions in \cite{LiuZheng}, allowing to obtain convergence of Riccati operators for second-order systems $\ddot y + A y = Bu$ with $A:D(A)\rightarrow X$ positive selfadjoint with compact inverse and $B$ bounded control operator. General conservative equations are treated in \cite{EZ1} where it is proved that adding a viscosity term in the numerical scheme helps to recover a uniform exponential decay, provided a uniform observability inequality holds true for the corresponding conservative equation (see also \cite{Alabau,Trelat_stab2018} for semilinear equations).
Of course, given a specific equation, the difficulty is to establish the uniform observability inequality. This is a difficult issue, investigated in some particular cases (see \cite{ZuazuaSIREV} for a survey).

Anyway, the observability inequality \eqref{obs_stab} is written in the semidiscrete case as
$$
\Vert S_N(T)^*\psi_N\Vert \leq C \left( \int_0^T \Vert B_N^*S(T-t)^*\psi_N\Vert^2\, dt\right)^{1/2} + \alpha\Vert\psi_N\Vert
$$
with $\alpha\in(0,1)$ and $C>0$ uniform with respect to $N$. Such a uniform inequality is likely to be true in many cases. For instance it is nicely shown in \cite[Section 5]{Zuazua_RendiConti} that approximate boundary controllability for 1D waves withstands (in a uniform way) a finite-difference semi-discretization scheme. This is shown by using a functional similar to the one \eqref{defJ} used here (see also \cite{Boyer,LabbeTrelat} for uniform exact null controllability in the parabolic case).

But such considerations go beyond the scope of the present paper. We think that the observability inequalities derived in our main results may be used, at least to recover some known results on uniform convergence, and maybe to establish new ones.

\paragraph{Hautus test.} There exist many results with variants of Hautus tests. For instance, when $(S(t))_{t\geq 0}$ is a normal $C_0$-group, the Hautus test property
\begin{equation*}
\exists C>0\quad \mid\quad \Vert (\lambda\mathrm{id}-A^*)\psi\Vert_X^2+\vert\mathrm{Re}(\lambda)\Vert B^*\psi\Vert_X^2\geq C\vert\mathrm{Re}(\lambda)\vert^2\Vert\psi\Vert_X^2\qquad\forall \lambda\in C_-\quad\forall\psi\in D(A^*)
\end{equation*}
where $\C_-$ is the open left complex half-plane, is sufficient to ensure exponential stabilizability (see \cite{JacobZwart_2009}).
The question of how Hautus tests are related to the observability inequalities derived in our paper is open.

\paragraph{Polynomial stabilizability.}
We have provided in Theorem \ref{thm_stab} a characterization of exponential stabilizability.
The $C_0$-semigroup $(S(t))_{t\geq 0}$ is said to be polynomially stable when there exist constants $\gamma,\delta>0$ such that $\Vert S(t)(A-\beta\mathrm{id})^{-\gamma}\Vert_{L(X)}\leq M t^{-\delta}$ for every $t\geq 1$, for some $M>0$ and some $\beta\in\rho(A)$ (see \cite{JacobSchnaubelt} where polynomial stability is compared with observability).
Finding a dual characterization of polynomial stabilizability in terms of an observability inequality is an open issue, which may be related to the previous question on Hautus tests.

%
%

\section{Proofs}\label{sec:proofs}
In this section, we provide  proofs of Theorems \ref{thm_alpha_null}, \ref{thm_approx} and \ref{thm_stab}, following Fenchel duality arguments. In turn, we establish intermediate results which may have interest in themselves for other purposes.
%
%

\subsection{Fenchel duality arguments}\label{sec_fenchel}
We start our analysis by considering the $\alpha$-null controllability problem or the approximate null controllability problem for the control system \eqref{contsyst}.
What is written hereafter in this first subsection essentially follows the classical analysis by Fenchel duality done in \cite{Lions_1992} (see also \cite{GlowinskiLionsHe}), but is written in a more general framework.

Let $\alpha\geq 0$ and let $T>0$ be arbitrary. Let $\overline B_1$ be the closed unit ball in $X$. Given an arbitrary $y_0\in X$, we consider the problem of steering the control system \eqref{contsyst} from $y_0$ to $\alpha\Vert y_0\Vert_X\overline B_1$ in time $T$, i.e.,
$$
\Vert y(T;y_0,u)\Vert_X \leq \alpha\Vert y_0\Vert_X
$$
($\alpha\Vert y_0\Vert_X$-null controllability problem).
When it is possible, i.e., when the control system \eqref{contsyst} is $\alpha\Vert y_0\Vert_X$-null controllable, we search the control of minimal $L^2$ norm (which is unique by strict convexity). We set
$$
S_{y_0,\alpha}^T = \inf \left\{ \frac{1}{2} \Vert u\Vert_{L^2(0,T;U)}^2 \quad \bigm| \quad y(T;y_0,u)\in \alpha\Vert y_0\Vert_X\overline B_1 \right\}
$$
with the convention that $S_{y_0,\alpha}^T = +\infty$ whenever $\alpha\Vert y_0\Vert_X\overline B_1$ is not reachable from $y_0$ in time $T$.

The following lemma is obvious.

\begin{lemma}\label{lemS}
The control system \eqref{contsyst} is:
\begin{itemize}
\item $\alpha$-null controllable in time $T$ if and only if $S_{y_0,\alpha}^T<+\infty$ for every $y_0\in X$ (where $\alpha\geq 0$ is arbitrary);
\item approximately null controllable in time $T$ if and only if $S_{y_0,\alpha}^T<+\infty$ for every $y_0\in X$ and for every $\alpha>0$;
\end{itemize}
\end{lemma}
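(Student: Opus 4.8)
The plan is to prove both equivalences by directly unwinding the definitions. Since $S_{y_0,\alpha}^T$ is by construction the value of a minimization problem over the set of admissible controls, the only content of the lemma is to match the \emph{feasibility} of that problem (finiteness of its value) with the controllability property under consideration. In particular, neither closedness of the feasible set nor attainment of the infimum plays any role here; it suffices to detect whether the feasible set is empty.

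First I would treat the first item. Fix $\alpha\geq 0$ and $y_0\in X$, and consider the constraint set
$$
\mathcal{U}_{y_0,\alpha}^T = \left\{ u\in L^2(0,T;U) \ \mid\ y(T;y_0,u)\in\alpha\Vert y_0\Vert_X\overline B_1 \right\}.
$$
By definition of the closed ball, $\mathcal{U}_{y_0,\alpha}^T$ is exactly the set of controls $u$ with $\Vert y(T;y_0,u)\Vert_X\leq\alpha\Vert y_0\Vert_X$, that is, the controls realizing $\alpha\Vert y_0\Vert_X$-null controllability from $y_0$. If $\mathcal{U}_{y_0,\alpha}^T\neq\emptyset$, then $S_{y_0,\alpha}^T$ is the infimum of the nonnegative quantity $\frac12\Vert u\Vert_{L^2(0,T;U)}^2$ over a nonempty set, hence $S_{y_0,\alpha}^T\in[0,+\infty)$; if $\mathcal{U}_{y_0,\alpha}^T=\emptyset$, then $S_{y_0,\alpha}^T=+\infty$ by the stated convention $\inf\emptyset=+\infty$. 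Thus $S_{y_0,\alpha}^T<+\infty$ if and only if $\mathcal{U}_{y_0,\alpha}^T\neq\emptyset$, i.e., if and only if some control steers $y_0$ into $\alpha\Vert y_0\Vert_X\overline B_1$ in time $T$. Quantifying over all $y_0\in X$ then yields the first equivalence.

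The second item follows immediately. As recalled in the introduction, the control system \eqref{contsyst} is approximately null controllable in time $T$ if and only if it is $\alpha$-null controllable in time $T$ for every $\alpha>0$. Combining this with the first item, applied for each fixed $\alpha>0$, approximate null controllability in time $T$ holds if and only if $S_{y_0,\alpha}^T<+\infty$ for every $y_0\in X$ and every $\alpha>0$.

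I do not foresee any genuine obstacle, which is why the lemma is labelled obvious: the statement is a pure reformulation of the definitions. The only point deserving (minimal) attention is the convention $\inf\emptyset=+\infty$, since it is precisely this convention that turns finiteness of $S_{y_0,\alpha}^T$ into nonemptiness of the feasible set, and hence into the controllability property.
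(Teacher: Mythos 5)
Your proof is correct and is exactly the definition-unwinding argument the paper has in mind: the paper states this lemma without proof, declaring it obvious, and your argument (finiteness of $S_{y_0,\alpha}^T$ equals nonemptiness of the feasible set, via the convention $\inf\emptyset=+\infty$, combined with the already-recorded equivalence between approximate null controllability and $\alpha$-null controllability for every $\alpha>0$) is precisely the intended one. Your remark that attainment of the infimum is irrelevant here is also accurate, since only feasibility matters.
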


\subsubsection{Application of Fenchel duality}
Following \cite{Lions_1992}, we define the convex and lower semi-continuous functions $F:L^2(0,T;U)\rightarrow[0,+\infty)$ and $G:X\rightarrow[0,+\infty]$ by
$$
F(u) =\frac{1}{2} \Vert u\Vert_{L^2(0,T;U)}^2
$$
and
\begin{equation*}
G(\varphi) = \left\{ \begin{array}{ll}
0 & \textrm{if}\ \varphi\in -S(T)y_0+\alpha\Vert y_0\Vert_X\overline B_1 \\
+\infty & \textrm{otherwise}
\end{array}\right.
\end{equation*}
and we note that
$$
S_{y_0,\alpha}^T = \inf_{u\in L^2(0,T;U)} ( F(u)+G(L_Tu) ) .
$$
The Fenchel conjugates $F^*:L^2(0,T;U)\rightarrow[0,+\infty)$ and $G^*:X\rightarrow[0,+\infty]$ are given by
$$
F^*(v) = \sup_{u\in L^2(0,T;U)} \left( \langle v,u\rangle_X-F(u) \right) = \frac{1}{2} \Vert v\Vert^2_{L^2(0,T;U)} = F(v)
$$
and
$$
G^*(\psi) = \sup_{\varphi\in X} \left( \langle \varphi,\psi\rangle_X-G(\varphi)\right)
= \sup_{\varphi\in -S(T)y_0+\alpha\Vert y_0\Vert_X\overline B_1} \langle\varphi,\psi\rangle_X
= - \langle\psi,S(T)y_0\rangle_X+\alpha\Vert y_0\Vert_X\Vert\psi\Vert_X
$$
where the latter equality is obtained by applying the Cauchy-Schwarz inequality.

Noting that $L_T\,\mathrm{dom(F)} = \mathrm{Ran}(L_T)$ intersects the set of points at which $G$ is continuous when \eqref{contsyst} is either $\alpha$-null controllable in time $T$ (with $\alpha>0$ fixed) or approximately null controllable in time $T$, we infer from the Fenchel-Rockafellar duality theorem (see \cite{Fenchel,Rockafellar}) that
$$
S_{y_0,\alpha}^T = - \inf_{\psi\in X} \left( F^*(L_T^*\psi)+G^*(\psi) \right)
= - \inf_{\psi\in X} J_{y_0,\alpha}^T(\psi)
$$
where we have set
\begin{equation}\label{defJ}
J_{y_0,\alpha}^T(\psi) = F^*(L_T^*\psi)+G^*(\psi) = \frac{1}{2}\langle G_T\psi,\psi\rangle_X - \langle\psi,S(T)y_0\rangle_X + \alpha\Vert y_0\Vert_X\Vert\psi\Vert_X
\end{equation}
for every $\psi\in X$.
Note that $J_{y_0,\alpha}^T$ is differentiable except at $\psi=0$.

This result is still valid for $\alpha=0$ but is not a consequence of the Fenchel-Rockafellar duality theorem (because we have used in a critical way that $\alpha>0$): for $\alpha=0$ this is the usual procedure in the Hilbert Uniqueness Method (see \cite{GlowinskiLionsHe,Lions_HUM}).

\subsubsection{Computation of the minimizer}
As said above, when $S_{y_0,\alpha}^T<+\infty$, there is a unique minimizer $\bar u_{y_0,\alpha}\in L^2(0,T;U)$ and there is also a unique minimizer $\bar\psi_{y_0,\alpha}\in X$ of $J_{y_0,\alpha}^T$ (this follows from \eqref{eqgrad} below), and $S_{y_0,\alpha}^T = \frac{1}{2}\Vert\bar u_{y_0,\alpha}\Vert_{L^2(0,T;U)}^2 = -J_{y_0,\alpha}^T(\bar\psi_{y_0,\alpha})$.
We have either $\bar\psi_{y_0,\alpha}=0$ and then $\bar u_{y_0,\alpha}=0$ and $S_{y_0,\alpha}^T = 0$, or $\bar\psi_{y_0,\alpha}\neq 0$ and then $\nabla J_{y_0,\alpha}^T(\bar\psi_{y_0,\alpha})=0$, which gives
\begin{equation}\label{eqgrad}
G_T\bar\psi_{y_0,\alpha}-S(T)y_0+\alpha\Vert y_0\Vert_X\frac{\bar\psi_{y_0,\alpha}}{\Vert\bar\psi_{y_0,\alpha}\Vert_X} = 0 .
\end{equation}
Given any $\psi\in X$, we set $\psi=r\sigma$ with $r=\Vert\psi\Vert_X$ and $\sigma\in X$ of norm $1$ (polar coordinates). For the minimizer $\bar\psi_{y_0,\alpha}=\bar r_{y_0,\alpha}\bar\sigma_{y_0,\alpha}$, we infer from \eqref{eqgrad} that
$$
\bar r_{y_0,\alpha} = \frac{\langle S(T)y_0,\bar\sigma_{y_0,\alpha}\rangle_X-\alpha\Vert y_0\Vert_X}{\langle G_T\bar\sigma_{y_0,\alpha},\bar\sigma_{y_0,\alpha}\rangle_X} ,\qquad
\bar\sigma_{y_0,\alpha} = (\bar r_{y_0,\alpha} G_T+\alpha\Vert y_0\Vert_X)^{-1} S(T)y_0 .
$$
Here, we used the facts that $\langle G_T\bar\sigma_{y_0,\alpha},\bar\sigma_{y_0,\alpha}\rangle_X\neq0$ (which follows from \eqref{eqgrad}) and that $\bar u_{y_0,\alpha}\neq 0$.
Note that, necessarily, $\langle S(T)y_0,\bar\sigma_{y_0,\alpha}\rangle_X-\alpha\Vert y_0\Vert_X\geq 0$.

Note also that, in the Fenchel duality argument, the optimal control $\bar u_{y_0,\alpha}$ is given in function of $\bar\psi_{y_0,\alpha}$ by
$$
\bar u_{y_0,\alpha}(t) = (L_T^* \bar\psi_{y_0,\alpha})(t) = B^*S(T-t)^*\bar\psi_{y_0,\alpha} .
$$

Until that step, there is nothing new with respect to the existing literature. Up to our knowledge, the novelty is in the next step, with a simple remark leading to an observability inequality.

\subsubsection{An alternative optimization problem}
Following the above arguments, we first note that $J_{y_0,\alpha}^T(\psi) = J_{y_0,\alpha}^T(r\sigma) = \frac{1}{2}r^2\langle G_T\sigma,\sigma\rangle_X-r\left( \langle\sigma,S(T)y_0\rangle_X-\alpha\Vert y_0\Vert_X\right)$ and that $J_{y_0,\alpha}^T(0)=0$, and hence\footnote{Given $a\geq 0$ and $b\in\R$, we have $\displaystyle\inf_{r>0}(ar^2-br)=\left\{\begin{array}{ll}0 & \textrm{if}\ b\leq 0\\ -\frac{b^2}{4a} \ (-\infty\ \textrm{if}\ a=0) & \textrm{if}\ b>0,\ \textrm{reached at}\ r=\frac{b}{2a}\end{array}\right.$}, for any fixed $\sigma\in X$,
$$
\inf_{r>0} J_{y_0,\alpha}^T(r\sigma) =
\left\{\begin{array}{ll}
0 & \textrm{if}\ \langle\sigma,S(T)y_0\rangle_X-\alpha\Vert y_0\Vert_X \leq 0, \\
\displaystyle  - \frac{1}{2} \frac{ \left( \langle\sigma,S(T)y_0\rangle_X-\alpha\Vert y_0\Vert_X \right)^2 }{\langle G_T\sigma,\sigma\rangle_X} & \textrm{if}\ G_T\sigma\neq 0\ \textrm{and}\ \langle\sigma,S(T)y_0\rangle_X-\alpha\Vert y_0\Vert_X >0, \\
-\infty & \textrm{if}\ G_T\sigma=0\ \textrm{and}\ \langle\sigma,S(T)y_0\rangle_X-\alpha\Vert y_0\Vert_X >0.
\end{array}\right.
$$
Besides, by the definition \eqref{def_muy0alpha} of $\mu_{y_0,\alpha}^T$, we have
$$
\mu_{y_0,\alpha}^T =  \sup_{\psi\in X}  \left\{\begin{array}{ll}
0 & \textrm{if}\ \langle\psi,S(T)y_0\rangle_X-\alpha\Vert y_0\Vert_X\Vert\psi\Vert_X \leq 0, \\
\displaystyle \frac{ \langle\psi,S(T)y_0\rangle_X-\alpha\Vert y_0\Vert_X\Vert\psi\Vert_X }{\Vert G_T^{1/2}\psi\Vert_X} & \textrm{if}\ G_T\psi\neq 0\ \textrm{and}\ \langle\psi,S(T)y_0\rangle_X-\alpha\Vert y_0\Vert_X\Vert\psi\Vert_X >0, \\
+\infty & \textrm{if}\ G_T\psi=0\ \textrm{and}\ \langle\psi,S(T)y_0\rangle_X-\alpha\Vert y_0\Vert_X\Vert\psi\Vert_X >0.
\end{array}\right.
$$
It follows that
$$
S_{y_0,\alpha}^T = -\inf_{\psi\in X} J_{y_0,\alpha}^T(\psi) = -\inf_{\Vert\sigma\Vert_X=1} \inf_{r>0} J_{y_0,\alpha}^T(r\sigma) = \frac{1}{2} \left( \mu_{y_0,\alpha}^T \right)^2.
$$
Note that, when $0<\mu_{y_0,\alpha}^T < +\infty$, we have
$$
\mu_{y_0,\alpha}^T = \Vert\bar u_{y_0,\alpha}\Vert_{L^2(0,T;U)} = \frac{\langle\bar\psi_{y_0,\alpha},S(T)y_0\rangle_X-\alpha\Vert y_0\Vert_X\Vert\bar\psi_{y_0,\alpha}\Vert_X}{\Vert G_T^{1/2}\bar\psi_{y_0,\alpha}\Vert_X} .
$$

\begin{remark}\label{rem_muy0T}
According to  the above discussions, when   $\mu_{y_0,\alpha}^T<+\infty$, it is the smallest  constant $C\in[0,+\infty)$ such that there exists $u\in L^2(0,T;U)$ such that $\Vert y(T;y_0,u)\Vert_X\leq \alpha\Vert y_0\Vert_X$ and $\Vert u\Vert_{L^2(0,T;U)}\leq C$.
When $C=\mu_{y_0,\alpha}^T$, one has $u=\bar u_{y_0,\alpha}$.
\end{remark}


\subsubsection{Conclusion}
Theorems \ref{thm_alpha_null} and \ref{thm_approx} follow, using Lemma \ref{lemS} and the fact that $S_{y_0,\alpha}^T = \frac{1}{2} \left( \mu_{y_0,\alpha}^T \right)^2$.

\subsection{Fenchel dualization of the exponential stabilization property}\label{sec_expstab}
We start by proving \eqref{connectionconsts10}.

\begin{lemma}\label{appendixprop2}
Given $\alpha>0$ and $T>0$, $\mu_\alpha^T$ is the smallest possible  $C\in[0,+\infty]$ such that the following weak observability inequality is satisfied:
$$
\Vert S(T)^*\psi\Vert_X-\alpha\Vert\psi\Vert_X \leq C \Vert G_T^{1/2}\psi\Vert_X\qquad\forall\psi\in X
$$
and this, independently on the sign of $\Vert S(T)^*\psi\Vert_X-\alpha\Vert\psi\Vert_X$.
Moreover, when $\mu_\alpha^T<+\infty$, it is the smallest constant $C\in[0,+\infty)$
such that the above weak observability inequality holds.
\end{lemma}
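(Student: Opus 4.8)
The plan is to prove the two-sided inequality between $\mu_\alpha^T$, defined in \eqref{def_muT} as the supremum over unit vectors $y_0$ of the constants $\mu_{y_0,\alpha}^T$ from \eqref{def_muy0alpha}, and the quantity
$$
\widetilde C := \inf\left\{C\geq 0\ \mid\ \Vert S(T)^*\psi\Vert_X-\alpha\Vert\psi\Vert_X\leq C\Vert G_T^{1/2}\psi\Vert_X\ \ \forall\psi\in X\right\}.
$$
The bridge between the two is the elementary Hilbert-space duality identity
$$
\sup_{\Vert y_0\Vert_X=1}\langle S(T)^*\psi,y_0\rangle_X=\Vert S(T)^*\psi\Vert_X\qquad\forall\psi\in X,
$$
the supremum being attained at $y_0=S(T)^*\psi/\Vert S(T)^*\psi\Vert_X$ when $S(T)^*\psi\neq0$. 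Rewriting $\langle\psi,S(T)y_0\rangle_X=\langle S(T)^*\psi,y_0\rangle_X$ throughout, the whole proof reduces to interchanging this supremum with the constants appearing in the two infima.

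For the inequality $\mu_\alpha^T\leq\widetilde C$, I would take any admissible $C$ in the definition of $\widetilde C$ and any unit vector $y_0$. The Cauchy--Schwarz inequality gives $\langle S(T)^*\psi,y_0\rangle_X\leq\Vert S(T)^*\psi\Vert_X$, so that
$$
\langle\psi,S(T)y_0\rangle_X-\alpha\Vert y_0\Vert_X\Vert\psi\Vert_X\leq\Vert S(T)^*\psi\Vert_X-\alpha\Vert\psi\Vert_X\leq C\Vert G_T^{1/2}\psi\Vert_X
$$
for every $\psi\in X$. By \eqref{def_muy0alpha} this shows $\mu_{y_0,\alpha}^T\leq C$ for every unit $y_0$, whence $\mu_\alpha^T\leq C$ by \eqref{def_muT}; taking the infimum over admissible $C$ yields $\mu_\alpha^T\leq\widetilde C$.

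For the reverse inequality, and simultaneously for the attainment claim, I would distinguish whether $\mu_\alpha^T$ is finite. If $\mu_\alpha^T=+\infty$ there is nothing to prove. If $\mu_\alpha^T<+\infty$, then for each unit $y_0$ the infimum defining $\mu_{y_0,\alpha}^T$ is attained (as recorded after \eqref{def_muy0alpha}), so
$$
\langle S(T)^*\psi,y_0\rangle_X-\alpha\Vert\psi\Vert_X\leq\mu_{y_0,\alpha}^T\Vert G_T^{1/2}\psi\Vert_X\leq\mu_\alpha^T\Vert G_T^{1/2}\psi\Vert_X
$$
holds for every $\psi\in X$ and every unit $y_0$. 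Fixing $\psi$ and taking the supremum over $\Vert y_0\Vert_X=1$ on the left, the duality identity turns $\langle S(T)^*\psi,y_0\rangle_X$ into $\Vert S(T)^*\psi\Vert_X$, giving the weak observability inequality with $C=\mu_\alpha^T$. This proves at once that $\widetilde C\leq\mu_\alpha^T$ and that, when $\mu_\alpha^T<+\infty$, the infimum $\widetilde C$ is attained at $C=\mu_\alpha^T$, which is the smallest-constant assertion. The statement that the inequality holds independently of the sign of $\Vert S(T)^*\psi\Vert_X-\alpha\Vert\psi\Vert_X$ is then automatic: when this quantity is $\leq0$ the inequality holds trivially for any $C\geq0$, the sole binding case being where it is positive.

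The argument is essentially bookkeeping; the only points requiring care are the case $S(T)^*\psi=0$, where the left-hand side of the weak observability inequality is $\leq0$ so that nothing is claimed, and the legitimacy of exchanging the supremum over $y_0$ with the universal quantifier over $\psi$, which is valid precisely because the bound $\mu_{y_0,\alpha}^T\leq\mu_\alpha^T$ holds uniformly in $y_0$ once $\mu_\alpha^T<+\infty$.
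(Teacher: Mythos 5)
Your proof is correct and rests on exactly the same mechanism as the paper's: the duality identity $\sup_{\Vert y_0\Vert_X=1}\langle S(T)^*\psi,y_0\rangle_X=\Vert S(T)^*\psi\Vert_X$ combined with an interchange of the supremum over unit $y_0$ with the quantifier over $\psi$ (the paper phrases this as swapping two suprema of an explicit ratio functional $F_\alpha^T(y_0,\psi)$ with a case analysis on $G_T\psi=0$, while you organize it as a two-sided inequality between the two infima of admissible constants, using the attainment of $\mu_{y_0,\alpha}^T$ noted after \eqref{def_muy0alpha}). The difference is purely one of bookkeeping, not of substance.
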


\begin{proof}
Using \eqref{def_muy0alpha}, we have
\begin{equation}\label{obser-const-connection}
\mu_{y_0,\alpha}^T= \sup\limits_{\psi\in X}F_\alpha^T(y_0,\psi)
\end{equation}
where
$$
F_\alpha^T(y_0,\psi)=
\begin{cases}
\max\left( \displaystyle\frac{\langle\psi,S(T)y_0\rangle_X-\alpha\Vert y_0\Vert_X\Vert\psi\Vert_X}{\Vert G_T^{1/2}\psi\Vert_X},\,0\right)
 \quad &\text{if}\; (y_0,\psi)\in D_1,\\
 +\infty &\text{if}\;(y_0,\psi)\in D_2,\\
 ~0&\text{if}\;(y_0,\psi)\in D_3,
\end{cases}
$$
with
\begin{align*}
D_1&=\left\{(y_0,\psi)\in X\times X\ \mid\ G_T\psi\neq0\right\},\\
D_2&=\left\{(y_0,\psi)\in X\times X\ \mid\  G_T\psi=0\text{ and }\langle\psi,S(T)y_0\rangle_X-\alpha\Vert y_0\Vert_X\Vert\psi\Vert_X>0 \right\},\\
D_3&=\left\{(y_0,\psi)\in X\times X\ \mid\ G_T\psi=0\text{ and }\langle\psi,S(T)y_0\rangle_X-\alpha\Vert y_0\Vert_X\Vert\psi\Vert_X\leq0\right\}.
\end{align*}
Since
$$
\sup_{\Vert y_0\Vert_X=1}\sup\limits_{\psi\in X}F_\alpha^T(y_0,\psi)
 =\sup\limits_{\psi\in X}\sup_{\Vert y_0\Vert_X=1}F_\alpha^T(y_0,\psi)
$$
and
$$
\sup\limits_{\|y_0\|_X=1}F_\alpha^T(y_0,\psi)=
\begin{cases}
\max\left( \displaystyle\frac{\Vert S(T)^*\psi\Vert_X-\alpha\Vert\psi\Vert_X}{\Vert G_T^{1/2}\psi\Vert_X},\,0\right)
 \quad &\text{if}\; G_T\psi\neq0,\\
 +\infty &\text{if}\; G_T\psi=0,\,\exists\, y_0\text{ s.t.}(y_0,\psi)\in D_2,\\
 ~0&\text{if}\; G_T\psi=0,\,\nexists\, y_0\text{ s.t.}(y_0,\psi)\in D_2,
\end{cases}
$$
we derive the desired result from \eqref{def_muT} and \eqref{obser-const-connection}.
\end{proof}

We now give another interpretation of $\mu_\alpha^T$, useful to address exponential stabilizability.

\begin{lemma}\label{lem_interpret_muT}
Let $\alpha>0$ and $T>0$ be such that $\mu_\alpha^T<+\infty$. Then
$\mu_\alpha^T$ is the smallest  constant $C\geq 0$ such that, for every $y_0\in X$, there exists $u\in L^2(0,T;U)$ such that $\Vert y(T;y_0,u)\Vert_X\leq \alpha\Vert y_0\Vert_X$ and $\Vert u\Vert_{L^2(0,T;U)}\leq C\Vert y_0\Vert_X$.
\end{lemma}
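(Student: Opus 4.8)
The plan is to leverage the identity $S_{y_0,\alpha}^T = \frac{1}{2}(\mu_{y_0,\alpha}^T)^2$ already established in Section \ref{sec_fenchel}, together with the positive homogeneity $\mu_{y_0,\alpha}^T = \mu_{y_0/\Vert y_0\Vert_X,\alpha}^T\Vert y_0\Vert_X$ and the definition $\mu_\alpha^T = \sup_{\Vert y_0\Vert_X=1}\mu_{y_0,\alpha}^T$. The statement to prove is that, under the hypothesis $\mu_\alpha^T<+\infty$, the quantity $\mu_\alpha^T$ is the smallest constant $C\geq 0$ such that every $y_0\in X$ admits a control $u$ with $\Vert y(T;y_0,u)\Vert_X\leq\alpha\Vert y_0\Vert_X$ and $\Vert u\Vert_{L^2(0,T;U)}\leq C\Vert y_0\Vert_X$. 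So the proof splits into two halves: feasibility of $C=\mu_\alpha^T$, and minimality.

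\textbf{Feasibility.} First I would fix $y_0\in X$ with $y_0\neq 0$ (the case $y_0=0$ being trivial with $u=0$) and apply Remark \ref{rem_muy0T} to the normalized initial datum $y_0/\Vert y_0\Vert_X$, which has norm $1$. Since $\mu_{y_0/\Vert y_0\Vert_X,\alpha}^T\leq\mu_\alpha^T<+\infty$, Remark \ref{rem_muy0T} furnishes the optimal control $\bar u_{y_0/\Vert y_0\Vert_X,\alpha}$ steering $y_0/\Vert y_0\Vert_X$ into the ball $\alpha\overline B_1$ with $\Vert\bar u\Vert_{L^2}=\mu_{y_0/\Vert y_0\Vert_X,\alpha}^T\leq\mu_\alpha^T$. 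Then I would rescale by $\Vert y_0\Vert_X$: by linearity of the flow $y(T;\cdot,\cdot)$ (recall $y(T;y_0,u)=S(T)y_0+L_Tu$), the control $u=\Vert y_0\Vert_X\,\bar u$ steers $y_0$ into $\alpha\Vert y_0\Vert_X\overline B_1$, and satisfies $\Vert u\Vert_{L^2}=\Vert y_0\Vert_X\,\mu_{y_0/\Vert y_0\Vert_X,\alpha}^T\leq\mu_\alpha^T\Vert y_0\Vert_X$. This shows $C=\mu_\alpha^T$ works uniformly over all $y_0$.

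\textbf{Minimality.} Conversely, suppose some $C\geq 0$ has the stated property. Then for each $y_0$ with $\Vert y_0\Vert_X=1$ there is a control $u$ with $\Vert y(T;y_0,u)\Vert_X\leq\alpha\Vert y_0\Vert_X=\alpha$ and $\Vert u\Vert_{L^2}\leq C$; by the very definition of $S_{y_0,\alpha}^T$ as an infimum over admissible controls, this forces $\tfrac12\Vert u\Vert_{L^2}^2\geq S_{y_0,\alpha}^T=\tfrac12(\mu_{y_0,\alpha}^T)^2$, hence $\mu_{y_0,\alpha}^T\leq\Vert u\Vert_{L^2}\leq C$. Taking the supremum over $\Vert y_0\Vert_X=1$ yields $\mu_\alpha^T\leq C$, so $\mu_\alpha^T$ is indeed the smallest such constant.

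\textbf{The main obstacle} I anticipate is purely bookkeeping rather than conceptual: ensuring the reduction to unit-norm $y_0$ interacts correctly with the scaling of the target ball (the target radius $\alpha\Vert y_0\Vert_X$ scales linearly, matching the control norm), and being careful that Remark \ref{rem_muy0T} is invoked at norm-one data where $\mu_{y_0,\alpha}^T\leq\mu_\alpha^T$ is guaranteed finite. I would also take a moment to confirm that the supremum defining $\mu_\alpha^T$ is genuinely attained or approached by the constructed controls, but since the feasibility direction produces the uniform bound $\mu_\alpha^T$ and the minimality direction produces the reverse inequality $\mu_\alpha^T\leq C$, the two together pin down $\mu_\alpha^T$ as the exact optimal constant, completing the proof.
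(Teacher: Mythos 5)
Your proposal is correct and takes essentially the same route as the paper: the paper's one-line proof invokes precisely the three facts you build on --- the identity $S_{y_0,\alpha}^T=\tfrac{1}{2}\bigl(\mu_{y_0,\alpha}^T\bigr)^2=\tfrac{1}{2}\Vert\bar u_{y_0,\alpha}\Vert_{L^2(0,T;U)}^2$, the homogeneity $\mu_{y_0,\alpha}^T=\mu_{y_0/\Vert y_0\Vert_X,\alpha}^T\Vert y_0\Vert_X\leq\mu_\alpha^T\Vert y_0\Vert_X$, and Remark \ref{rem_muy0T} --- which you simply spell out as a feasibility direction (rescaling the optimal control from unit-norm data) and a minimality direction (comparing any admissible constant $C$ with the infimum defining $S_{y_0,\alpha}^T$ and taking the supremum over the unit sphere). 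There is no gap; your elaboration matches the paper's intended argument.
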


\begin{proof}
The result follows from the facts that $S_{y_0,\alpha}^T = \frac{1}{2} \left( \mu_{y_0,\alpha}^T \right)^2 = \frac{1}{2}\Vert \bar u_{y_0,\alpha} \Vert_{L^2(0,T;U)}^2$, that $\mu_{y_0,\alpha}^T = \mu_{\frac{y_0}{\Vert y_0\Vert_X},\alpha}^T \Vert y_0\Vert_X\leq \mu_\alpha^T\Vert y_0\Vert_X$ and from Remark \ref{rem_muy0T}.
\end{proof}

Lemma \ref{lem_interpret_muT} is closely related to exponential stabilizability when $\alpha<1$. We indeed have the following result (easy consequence of well known results, however we provide a proof).

\begin{lemma}\label{lem_expstab}
The control system \eqref{contsyst} is exponentially stabilizable if and only if, for every $\alpha\in (0,1)$ (equivalently, there exists $\alpha\in (0,1)$), there exist $T>0$ and $C>0$ such that, for every $y_0\in X$, there exists $u\in L^2(0,T;U)$ such that $\Vert y(T;y_0,u)\Vert_X\leq \alpha\Vert y_0\Vert_X$ and $\Vert u\Vert_{L^2(0,T;U)}\leq C\Vert y_0\Vert_X$.

When this is satisfied, the best stabilization decay rate $\omega^*$ is the infimum of $\frac{\ln\alpha}{T}$ over all possible couples $(T,\alpha)$ for which the above inequalities are satisfied for some constant $C>0$.
\end{lemma}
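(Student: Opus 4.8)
The plan is to prove the equivalence stated in Lemma~\ref{lem_expstab}, namely that exponential stabilizability is equivalent to the existence, for each $\alpha\in(0,1)$, of $T>0$ and $C>0$ realizing the two-sided estimate $\Vert y(T;y_0,u)\Vert_X\leq\alpha\Vert y_0\Vert_X$ together with $\Vert u\Vert_{L^2(0,T;U)}\leq C\Vert y_0\Vert_X$. I would treat the two implications separately, and I expect the harder direction to be the construction of the feedback operator from the open-loop contraction estimate.

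\medskip\noindent\textbf{From the contraction estimate to exponential stabilizability.} Fix $\alpha\in(0,1)$ and suppose the estimate holds with some $T>0$ and $C>0$. The idea is to build a feedback by solving, on the finite horizon $[0,T]$, a linear quadratic optimal control problem penalizing both the state at time $T$ and the $L^2$ cost of the control, and then to iterate periodically. Concretely, for each initial state $y_0$ I would minimize a cost of the form $\tfrac12\Vert u\Vert_{L^2(0,T;U)}^2 + \tfrac{N}{2}\Vert y(T;y_0,u)\Vert_X^2$ for a suitable large weight $N$, whose unique minimizer depends linearly and boundedly on $y_0$ and is generated by a Riccati operator (this is where finite-horizon LQ / Riccati theory enters). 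The key point is that the hypothesis guarantees a competitor control of norm $\leq C\Vert y_0\Vert_X$ driving the state into the ball of radius $\alpha\Vert y_0\Vert_X$, so the optimal value is controlled; choosing $N$ large forces the optimal trajectory to satisfy a genuine contraction $\Vert y(T)\Vert_X\leq\alpha'\Vert y_0\Vert_X$ with some $\alpha'\in(\alpha,1)$ and simultaneously $\Vert u\Vert_{L^2}\leq C'\Vert y_0\Vert_X$. Since the problem is time-invariant, the resulting stationary feedback $u=Ky$ (read off from the associated algebraic/periodic Riccati operator) generates a semigroup $(S_K(t))$ for which the time-$T$ map contracts by a factor $\alpha'<1$; by the semigroup property $\Vert S_K(nT)\Vert_{L(X)}\leq(\alpha')^n$, and together with the uniform boundedness of $S_K$ on $[0,T]$ this yields $\Vert S_K(t)\Vert_{L(X)}\leq M e^{\omega t}$ with $\omega=\tfrac{\ln\alpha'}{T}<0$. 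The equivalence of the ``for every $\alpha$'' and ``there exists $\alpha$'' formulations is immediate, since achieving any single contraction factor $\alpha'<1$ on some horizon already gives exponential stability, and conversely exponential stability yields contraction factors arbitrarily close to any prescribed $\alpha\in(0,1)$ by taking the horizon large enough.

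\medskip\noindent\textbf{From exponential stabilizability to the contraction estimate.} Conversely, suppose $K\in L(X,U)$ gives $\Vert S_K(t)\Vert_{L(X)}\leq M e^{\omega t}$ with $\omega<0$. Given any $\alpha\in(0,1)$, I would simply choose $T$ large enough that $M e^{\omega T}\leq\alpha$, and take the closed-loop control $u(t)=K S_K(t)y_0$ on $[0,T]$. Then $y(T;y_0,u)=S_K(T)y_0$ satisfies $\Vert y(T;y_0,u)\Vert_X\leq M e^{\omega T}\Vert y_0\Vert_X\leq\alpha\Vert y_0\Vert_X$, and the control norm is bounded by $\Vert K\Vert_{L(X,U)}\bigl(\int_0^T M^2 e^{2\omega t}\,dt\bigr)^{1/2}\Vert y_0\Vert_X\leq C\Vert y_0\Vert_X$ with an explicit $C$. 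This gives the required estimate.

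\medskip\noindent\textbf{The decay-rate formula and the main obstacle.} For the final assertion identifying $\omega^*$ with the infimum of $\tfrac{\ln\alpha}{T}$ over admissible couples, the forward direction (every admissible couple furnishes a feedback with $\omega_K\leq\tfrac{\ln\alpha'}{T}$, hence $\omega^*\leq\inf\tfrac{\ln\alpha}{T}$ up to the $\alpha'$ versus $\alpha$ gap) follows from the construction above, while the reverse inequality comes from the converse direction: any feedback with rate $\omega_K$ produces, for every $\alpha>M e^{\omega_K T}$, an admissible couple, so letting $T\to+\infty$ shows $\tfrac{\ln\alpha}{T}$ can be pushed down to $\omega_K$. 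I expect the genuine obstacle to be the first implication: extracting a \emph{stationary} bounded feedback $K\in L(X,U)$ from the finite-horizon contraction estimate requires the solvability of the associated Riccati equation and the boundedness of the Riccati operator, which in the general $C_0$-semigroup setting is exactly the step that relies on standard but nontrivial infinite-horizon LQ theory (and, as the paper notes elsewhere, is the reason the unbounded-$B$ extension is restricted to analytic semigroups). Care is also needed to ensure the contraction factor $\alpha'$ and the cost constant $C'$ are controlled uniformly in $y_0$, which is where the homogeneity of $\mu_\alpha^T$ and Lemma~\ref{lem_interpret_muT} make the uniform-in-$y_0$ statement available for free.
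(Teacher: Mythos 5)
Your direction ``stabilizable $\Rightarrow$ contraction estimate'' coincides with the paper's proof (closed-loop control $u=KS_K(\cdot)y_0$ on a long enough horizon), and your competitor argument for the penalized finite-horizon LQ problem is sound as far as it goes. The genuine gap is in the sentence ``Since the problem is time-invariant, the resulting stationary feedback $u=Ky$ \ldots generates a semigroup $(S_K(t))$ for which the time-$T$ map contracts by a factor $\alpha'<1$.'' The finite-horizon problem minimizing $\tfrac12\Vert u\Vert^2+\tfrac{N}{2}\Vert y(T)\Vert_X^2$ is synthesized by the \emph{differential} Riccati operator $P_N(t)$, which is genuinely time-dependent because of the terminal penalty; time-invariance of $(A,B)$ does not make $P_N$ constant. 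Iterating this feedback over the windows $[kT,(k+1)T]$ produces a $T$-periodic evolution family, not a semigroup generated by $A+BK$ with a stationary $K\in L(X,U)$ --- and the paper's definition of exponential stabilizability requires precisely such a stationary $K$. Nor is there any direct argument that the algebraic Riccati feedback, once one passes to it, inherits the time-$T$ contraction factor $\alpha'$. The repair is exactly the paper's (more elementary) route, which also makes your LQ detour unnecessary: the hypothesis already provides, uniformly in $y_0$, open-loop controls with $\Vert y(T)\Vert_X\leq\alpha\Vert y_0\Vert_X$ and $\Vert u\Vert_{L^2}\leq C\Vert y_0\Vert_X$, so one simply concatenates them to get $\Vert y(kT)\Vert_X\leq\alpha^k\Vert y_0\Vert_X$ and $\Vert u\Vert_{L^2(0,+\infty;U)}^2\leq\frac{C^2}{1-\alpha^2}\Vert y_0\Vert_X^2$, checks exponential decay at intermediate times using $\Vert S(t)\Vert_{L(X)}\leq M$ on $[0,T]$, and then invokes the finite-cost-condition theorem of infinite-horizon Riccati theory (\cite[Theorem 4.3 page 240]{Z}) to extract the stationary $K$. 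Your closing paragraph shows you correctly sensed that this is where LQ theory must enter, but the specific contraction claim you lean on is unjustified, and the finite-cost criterion needs no linearity or boundedness of the map $y_0\mapsto u$, only existence of one finite-cost control per $y_0$.

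The gap propagates into your decay-rate formula. The infinite-horizon Riccati theorem asserts existence of \emph{some} exponentially stabilizing stationary $K$ with no control on $\omega_K$, so the inequality $\omega^*\leq\frac{\ln\alpha}{T}$ does not follow from your construction as written (it relied on the stationary time-$T$ contraction). The standard fix is a shifting argument: for $\varepsilon>0$, apply the finite-cost criterion to the pair $(A-\omega\,\mathrm{id},B)$ with $\omega=\frac{\ln\alpha}{T}+\varepsilon$; the rescaled trajectory $z(t)=e^{-\omega t}y(t)$ and control $v(t)=e^{-\omega t}u(t)$ of the concatenated solution still have finite quadratic cost, since $\alpha e^{-\omega T}=e^{-\varepsilon T}<1$ makes the geometric series converge, yielding a stationary $K$ with $\omega_K\leq\frac{\ln\alpha}{T}+\varepsilon$, and $\varepsilon$ is arbitrary. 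Your reverse inequality (any feedback with rate $\omega_K$ gives admissible couples with $\frac{\ln\alpha}{T}\to\omega_K$ as $T\to+\infty$) is correct and matches the paper.
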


\begin{proof}
Assume that the control system \eqref{contsyst} is exponentially stabilizable. Then there exists $K\in L(X,U)$ such that $\Vert S_K(t)\Vert_{L(X)}\leq Me^{\omega_Kt}$ for every $t\geq 0$, with $M\geq 1$ and $\omega_K<0$. Let $y_0\in X$. We set $u(t)=KS_K(t)y_0$ where $y(t)=S_K(t)y_0$ is the solution to $\dot y(t)=Ay(t)+BKy(t)$, $y(0)=y_0$. We have $\Vert y(t;y_0,u)\Vert_X=\Vert S_K(t)y_0\Vert_X\leq Me^{\omega_Kt}\Vert y_0\Vert_X=\alpha\Vert y_0\Vert_X$ for $T=\frac{1}{\omega_K}\ln\frac{\alpha}{M}$ and we compute
$\Vert u\Vert_{L^2(0,T;U)}\leq\frac{\Vert K\Vert_{L(X,U)}M}{\sqrt{-2\omega_K}}\|y_0\|_X$, whence the result.

Conversely, we proceed by iteration. For the initial condition $y_0$, there exists a control $u_0\in L^2(0,T;U)$ such that $\Vert y(T;y_0,u_0)\Vert_X\leq\alpha\Vert y_0\Vert_X$ and $\Vert u_0\Vert_{L^2(0,T;U)}\leq C\Vert y_0\Vert_X$. We set $y_1=y(T;y_0,u_0)$ and we repeat the argument for this new initial condition $y_1$, and then we iterate, obtaining that
$\Vert y_{j+1}\Vert_X=\Vert y((j+1)T;y_j,u_j)\Vert_X\leq\alpha\Vert y_j\Vert_X$ and $\Vert u_j\Vert_{L^2(0,T;U)}\leq C\Vert y_j\Vert_X$. The control $u$ defined as the concatenation of the controls $u_j\in L^2(jT,(j+1)T;U)$ generates the trajectory $y(\cdot;y_0,u)$ satisfying, at time $kT$, $\Vert y(kT;y_0,u)\Vert_X\leq \alpha^k\Vert y_0\Vert_X$, and
$$
\Vert u\Vert_{L^2(0,+\infty;U)}^2 \leq \sum_{j=0}^{+\infty} C^2 \Vert y(jT;y_0,u)\Vert_X^2 \leq C^2 \sum_{j=0}^{+\infty} \alpha^{2j}\Vert y_0\Vert_X^2 = \frac{C^2}{1-\alpha^2}\Vert y_0\Vert_X^2 .
$$
Let us prove that $\Vert y(t;y_0,u)\Vert_X$ decreases exponentially. The argument is standard. Taking $t\geq 0$ such that $kT\leq t<(k+1)T$ for some $k\in\N$, we have
$$
y(t;y_0,u) = S(t-kT)y(kT) + \int_{kT}^t S(t-s)Bu_k(s)\, ds .
$$
The semigroup $(S(t))_{t\geq 0}$ satisfies $\Vert S(t)\Vert_{L(X)}\leq M$ for every $t\in[0,T]$ for some $M\geq 1$. Therefore
$$
\Vert y(t;y_0,u)\Vert_X\leq M\left(1+\Vert B\Vert_{L(U,X)}\sqrt{T}\right) C \Vert y_k\Vert_X
\leq \frac{M}{\alpha}\left(1+\Vert B\Vert_{L(U,X)}\sqrt{T}\right) C \alpha^{k+1} \Vert y_0\Vert_X
$$
and since $\alpha^{k+1}=e^{(k+1)T\frac{\ln\alpha}{T}}\leq e^{\frac{\ln\alpha}{T}t}$ we infer that
$$
\Vert y(t;y_0,u)\Vert_X\leq \frac{M}{\alpha}\left(1+\Vert B\Vert_{L(U,X)}\sqrt{T}\right) C \Vert y_0\Vert_X e^{\frac{\ln\alpha}{T}t} \qquad\forall t>0.
$$
We have therefore found a control $u\in L^2(0,+\infty;U)$ such that
\begin{equation}\label{Jineq}
\int_0^{+\infty} \left( \Vert u(t)\Vert_U^2+\Vert y(t;y_0,u)\Vert_X^2\right)\, dt < +\infty .
\end{equation}
Hence, by the classical Riccati theory (see \cite[Theorem 4.3 page 240]{Z}), the control system \eqref{contsyst} is exponentially stabilizable.

The statement concerning the best stabilization rate is obvious by inspecting the above argument.
\end{proof}

\begin{remark}\label{remRiccati}
To prove Theorem \ref{thm_stab_extended} in Section \ref{sec:furthercomments}, we note that all arguments in the above proof still work (before the final step where Riccati theory is invoked) for an unbounded admissible control operator $B$, because the operators $L_t=\int_0^t S(t-s)Bu(s)\, ds$ are bounded in $X$ and we can write
\begin{multline*}
\left\Vert \int_{kT}^t S(t-s)Bu_k(s)\, ds\right\Vert_X = \left\Vert \int_0^{t-kT} S(t-kT-s)Bu_k(s+kT)\, ds\right\Vert_X \\
= \Vert L_{t-kT}u_k(kT+\cdot)\Vert_X \leq C \Vert u_k\Vert_{L^2(kT,(k+1)T;U)}
\end{multline*}
and the rest is unchanged: we find $u$ satisfying \eqref{Jineq}.

But for the final step of the proof, Riccati theory is classical and quite easy for bounded control operators but is much more intricate for unbounded control operators. Riccati theory is well established, in a general framework, for analytic semigroups (see \cite{LT1}).
\end{remark}

Theorem \ref{thm_stab} follows from Lemmas \ref{lem_interpret_muT} and \ref{lem_expstab}.

\appendix

\section{Appendix: exact null controllability implies complete stabilizability}\label{app:exactnull_implies_complete_stab}
We have the following result.

\begin{proposition}\label{prop_exactnull_implies_complete_stab}
If the control system \eqref{contsyst} is exactly null controllable in some time $T>0$ then it is completely stabilizable.
\end{proposition}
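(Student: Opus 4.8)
The plan is to show that the best stabilization decay rate $\omega^*$ defined in \eqref{defomegastar} equals $-\infty$, which is precisely complete stabilizability. I would exploit the characterization \eqref{def_omegastar} established in Theorem \ref{thm_stab}, namely that $\omega^*$ is the infimum of $\frac{\ln\alpha}{T'}$ over all couples $(\alpha,T')$ with $\alpha\in(0,1)$, $T'>0$ and $\mu_\alpha^{T'}<+\infty$. The whole point is to keep the \emph{single} time $T$ furnished by the exact null controllability hypothesis and to verify that $\mu_\alpha^T<+\infty$ for every $\alpha\in(0,1)$; then, letting $\alpha\to 0^+$ at that fixed $T$, the ratios $\frac{\ln\alpha}{T}$ tend to $-\infty$ and force $\omega^*=-\infty$.

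First I would record the observability inequality attached to exact null controllability in time $T$: by \eqref{obs_null} there exists $C_T>0$ with $\Vert S(T)^*\psi\Vert_X\leq C_T\Vert G_T^{1/2}\psi\Vert_X$ for all $\psi\in X$ (recall $\Vert G_T^{1/2}\psi\Vert_X=\Vert B^*S(T-\,\cdot\,)^*\psi\Vert_{L^2(0,T;U)}$). Since $\alpha\Vert\psi\Vert_X\geq 0$, this trivially yields $\Vert S(T)^*\psi\Vert_X-\alpha\Vert\psi\Vert_X\leq C_T\Vert G_T^{1/2}\psi\Vert_X$ for every $\alpha>0$. By Lemma \ref{appendixprop2} (equivalently, by \eqref{connectionconsts10}), $\mu_\alpha^T$ is the smallest constant for which such a weak observability inequality holds; hence $\mu_\alpha^T\leq C_T<+\infty$ for every $\alpha\in(0,1)$, the bound being uniform in $\alpha$. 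Alternatively, one may quote Remark \ref{rem_exactnullcont}, where the same bound appears as $\mu_\alpha^T\leq\mu_0^T=C_T$.

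It then remains to feed this into the rate formula. For the fixed $T$ above and each $\alpha\in(0,1)$, the couple $(\alpha,T)$ is admissible in \eqref{def_omegastar}, so $\omega^*\leq\frac{\ln\alpha}{T}$. Using $\frac{\ln\alpha}{T}\to-\infty$ as $\alpha\to 0^+$, I conclude $\omega^*=-\infty$, i.e.\ \eqref{contsyst} is completely stabilizable. If a more self-contained argument is preferred, one can instead invoke Lemma \ref{lem_expstab}: the uniform bound $\mu_\alpha^T\leq C_T$ provides, for each $\alpha$, controls realizing $\Vert y(T;y_0,u)\Vert_X\leq\alpha\Vert y_0\Vert_X$ with $\Vert u\Vert_{L^2(0,T;U)}\leq C_T\Vert y_0\Vert_X$, and the iteration carried out there produces a feedback whose decay rate is at most $\frac{\ln\alpha}{T}$.

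The only genuine obstacle is conceptual rather than computational: one must use that the \emph{same} time $T$ serves all $\alpha$, with $\mu_\alpha^T$ staying bounded as $\alpha\to 0^+$. This non-blow-up at fixed $T$ is exactly the extra strength of exact null controllability over mere exponential stabilizability, where the infimal time realizing $\alpha$-null controllability may have to grow to $+\infty$ as $\alpha\to 0^+$, and it is precisely what the elementary comparison with \eqref{obs_null} secures. Everything else is bookkeeping with the already-established constants.
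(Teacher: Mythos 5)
Your proof is correct, but it is not the route the paper takes in Appendix \ref{app:exactnull_implies_complete_stab}; it is instead exactly the alternative argument the authors sketch in one sentence of Remark \ref{rem_complete_stab} (``this result also follows by Remark \ref{rem_exactnullcont}, because $\mu_\alpha^T$ remains uniformly bounded as $\alpha\rightarrow 0$ for some fixed $T$''). The paper's own proof of Proposition \ref{prop_exactnull_implies_complete_stab} is a shift argument that never touches the $\mu_\alpha^T$ machinery: exact null controllability of $(A,B)$ in time $T$ is equivalent to that of $(A+\omega\,\mathrm{id},B)$ for any $\omega\in\R$, since $S_{A+\omega\mathrm{id}}(t)=e^{\omega t}S_A(t)$ only changes the constant in the observability inequality \eqref{obs_null}; applying the classical implication ``exact null controllability implies exponential stabilizability'' to the shifted pair yields $K_\omega$ with $A+\omega\,\mathrm{id}+BK_\omega$ generating an exponentially stable semigroup, hence $\Vert S_{A+BK_\omega}(t)\Vert_{L(X)}\leq Me^{-\omega t}$, and $\omega>0$ arbitrary gives complete stabilizability. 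Your argument instead feeds the $\alpha$-uniform bound $\mu_\alpha^T\leq C_T$ (immediate from \eqref{obs_null} and Lemma \ref{appendixprop2}, since dropping $-\alpha\Vert\psi\Vert_X$ only helps) into the rate formula \eqref{def_omegastar} and lets $\alpha\to 0^+$ at the fixed $T$. Each approach buys something: yours is a one-line corollary of Theorem \ref{thm_stab} and makes the quantitative mechanism transparent --- non-blow-up of $\mu_\alpha^T$ at fixed $T$ is precisely the surplus of exact null controllability over mere exponential stabilizability --- but it inherits the full weight of Theorem \ref{thm_stab}, in particular the iteration of Lemma \ref{lem_expstab} and the Riccati step, so for unbounded admissible control operators it is only available under the hypotheses of Theorem \ref{thm_stab_extended}; the paper's shift argument is self-contained modulo the standard implication from \cite{Z}, produces an explicit feedback $K_\omega$ for each prescribed decay rate, and does not need \eqref{def_omegastar} at all. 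Two small points to make explicit in your write-up: before invoking \eqref{def_omegastar} you should record that $\mu_\alpha^T<+\infty$ for some $\alpha\in(0,1)$ already gives exponential stabilizability via Theorem \ref{thm_stab}, so the rate formula is legitimately in force; and there is no circularity, since Theorem \ref{thm_stab} is proved in Section \ref{sec:proofs} independently of Proposition \ref{prop_exactnull_implies_complete_stab}.
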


\begin{proof}
We first note that $(A,B)$ is exactly controllable in time $T$ if and only if $(A+\omega\mathrm{id},B)$ is exactly controllable in time $T$, for any $\omega\in\R$. This follows straightforwardly by using the equivalence in terms of observability inequality and the fact that $S_{A+\omega\mathrm{id}}(t) = e^{\omega t}S_A(t)$ (with obvious notations).

Now, let $\omega>0$ be arbitrary. Since $(A+\omega\mathrm{id},B)$ is exactly controllable in time $T$, there exists $K_\omega\in L(X,U)$ such that $A+\omega\mathrm{id}+BK_\omega$ generates the exponentially stable semigroup $S_{A+\omega\mathrm{id}+BK_\omega}(t) = e^{\omega t} S_{A+BK_\omega}(t)$, and thus $\Vert S_{A+BK_\omega}(t)\Vert_{L(X)}\leq Me^{-\omega t}$ for some $M\geq 1$. The result follows.
\end{proof}

Surprisingly, we have not found this result in the existing literature. What can be found is that exact null controllability in some time $T$ implies expoential stabilizability (see, e.g., \cite{Z}) and that, when $(S(t))_{t\geq 0}$ is a group, exact null controllability in some time $T$ implies complete stabilizability (see \cite{Slemrod}; and the converse is true: see \cite{Z}). Here, in Proposition \ref{prop_exactnull_implies_complete_stab}, we do not require that $(S(t))_{t\geq 0}$ is a group.

\end{document}